\def\separation{\medskip}
\def\Z{Z\!\!\!Z}
\def\P{{\mathbb P}}
\def\C{{\mathbb C}}
\def\RR{{\mathbb R}}
\def\OO{{\mathcal O}}
\def\?{{\bf ??}}
\def\ker{{\rm ker}}
\newtheorem{alg}{Algorithm}
 \newtheorem{theorem}{Theorem}[section]
\newtheorem{prop}[theorem]{Proposition} 
\newtheorem{definition}[theorem]{Definition} 
\newtheorem{corollary}[theorem]{Corollary}
\newtheorem{remark}[theorem]{Remark}
\newcommand{\qed}{\hfill  $\Box$\separation}
\def\rig#1{\smash{ \mathop{\longrightarrow}\limits^{#1}}}
\numberwithin{equation}{section}                    
\begin{document}

\title{Effective methods for plane quartics,\\ their theta characteristics and  the Scorza map}
 \author{Giorgio Ottaviani} 
\date{} 
 \maketitle
 \begin{abstract} This is a revised version of the lecture notes prepared for the workshop on ``Plane quartics, Scorza map and related topics'', held in Catania,  January 19-21, 2016. The last section contains eight Macaulay2 scripts on theta characteristics and the Scorza map, with a tutorial. The first sections give an introduction to these scripts. The tutorial contains a list of the $36$ Scorza preimages of the Edge quartic.
\end{abstract}
\tableofcontents

\section{Introduction}
\subsection{How to write down plane quartics and their theta characteristics}
Plane quartics make a relevant family of algebraic curves because their
plane embedding is the canonical embedding. As a byproduct, intrinsic and projective geometry are strictly connected. A theta characteristic $\theta$
is by definition a square root  of the canonical bundle $K$, namely $2\theta=K$ in the additive notation, so $\theta$ is a priori an intrinsic object. There are $64=2^6$ theta characteristic, since the Jacobian variety of the plane quartic has real dimension $2g=6$. It is not a surprise that the  $64$ theta characteristics of a plane quartic show up in many projective constructions. There are $28$ odd theta characteristic (such that $h^0(\theta)=1$)
and $36$ even theta characteristic (such that $h^0(\theta)=0$).
The first well known fact is that the $28$ odd theta characteristics correspond to the $28$ bitangents of the plane quartic curve.
Indeed if a  line is tangent to the curve in two points $P$, $Q$, it is easy to see that $\theta:=P+Q$ satisfies $2\theta=K$, since the canonical divisor $K$ corresponds to the hyperplane divisor.
Moreover $h^0(P+Q)=1$ since $P+Q$ is an effective divisor. It is less known that the $36$ even theta characteristics may be visualized as the $36$ plane quartic curves that are the preimages through the Scorza map (see \S \ref{sec:scorza}). 
\begin{definition}\label{def:aro_system} A set of seven bitangents $\{\theta_1,\ldots, \theta_7\}$, that we identify with their odd theta characteristic,
is called an Aronhold system of bitangents if for every $\{i, j, k\}$ such that $1\le i< j< k\le 7$ we have $$h^0(2K-\theta_i-\theta_j-\theta_k)=0.$$
Replacing $2K=2\theta_i+2\theta_j$ this is equivalent to say that $\theta_i+\theta_j-\theta_k$ is an even theta characteristic.
For more informations see \cite[Defs. 4.1, 4.2]{Ser}.
\end{definition}

To write down explicitly a plane quartic and its theta characteristics, we will adopt the following four different descriptions
\begin{enumerate}
\item{$\bullet$} {\it A homogeneous polynomial $f$ of degree $4$ in $x_0, x_1, x_2$.} Computationally, this a vector with $15$ homogeneous coordinates.
\item{$\bullet$} {\it A symmetric linear determinantal representation of $f$, namely a symmetric $4\times 4$ matrix $A$ with linear entries in $x_0, x_1, x_2$, such that $\det(A)=f$.}
The exact sequence
$$0\rig{}\OO(-2)^4\rig{A}\OO(-1)^4\rig{}\theta\rig{}0$$
gives a even theta characteristic $\theta$, as a line bundle supported on the curve $\{f=0\}$. For a general $f$,
there are $36$ classes of matrices $A$ such that $\det(A)=f$ is given, up to $GL(4)$-congruence,
corresponding to the $36$ even theta characteristics. Computationally, we have a net of symmetric $4\times 4$ matrices $\langle A_0, A_1, A_2\rangle$.

\item{$\bullet$} Let $C=\{f=0\}$. {\it A symmetric $(3,3)$-correspondence in $C\times C$
  has the form $T_{\theta}=\{(x,y)\in C\times C | h^0(\theta +x-y)\neq 0\}$
corresponding to a even theta characteristic $\theta$.} Computationally, the divisor
$T_\theta$ is cut on the Segre variety $\P^2\times\P^2$ by six bilinear equations in $x, y$.
\item{$\bullet$} {\it Let $\{P_1,\ldots, P_7\}$ be seven general points in $\P^2$.
 The net of cubics through these seven points defines
a $2:1$ covering $\P^2(P_1,\ldots, P_7)\rig{\pi}\P^2$, with source the blow-up
of $\P^2$ at the points $P_i$, which ramifies over a
quartic $C\subset\P^2$.} In equivalent way, seven general lines in $\P^2$ define
a unique quartic curve $C$ such that the seven lines give a Aronhold system of bitangents of $C$.
 Computationally, these data can be encoded in a $7\times 3$ matrix (up to reorder the rows) or also in a $2\times 3$ matrix of the following type
\begin{equation}\label{eq:mat23}\begin{pmatrix}l_0&l_1&l_2\\q_0&q_1&q_2\end{pmatrix}\end{equation}
where $\deg l_i=1$, $\deg q_i=2$ and $l_i$ are independent. The maximal minors of this matrix vanish on seven points.
If $Z=\{P_1,\ldots, P_7\}$, then the resolution of the ideal $I_Z(3):=I_Z\otimes\OO(3)$ is
$$0\rig{}\OO(-1)\oplus\OO(-2)\rig{g}\OO^3\rig{}I_Z(3)\rig{}0$$
where the matrix of $g$ is the transpose of (\ref{eq:mat23}).
\end{enumerate}

The previous descriptions have an increasing amount of data, in the sense that
$2.$ and $3.$ are equivalent, while for the other items we have
\begin{equation}\label{eq:8-1}\{4. \textrm{seven points}\}\underbrace{\Longrightarrow}_{8:1} \{
\begin{array}{cc}2.&\textrm{symmetric determinant}\\
 3.&\textrm{symm. (3,3) corresp.}\end{array}\}\underbrace{\Longrightarrow}_{36:1} \{1. \textrm{quartic polynomial}\}\end{equation}

By forgetting the theta characteristic in the intermediate step, we get
the interesting correspondence
\begin{equation}\label{eq:288}\{ \textrm{seven points}\}\underbrace{\Longrightarrow}_{288:1}  \{ \textrm{quartic polynomial}\}\end{equation}

We will see how to ``move on the right'' in the diagrams
(\ref{eq:8-1}) and (\ref{eq:288}), from one description to another one.
These moves, connecting the several descriptions, are $SL(3)$-equivariant. Any family of quartics that is invariant by the $SL(3)$-action of projective linear transformations can be described by invariants or covariants in the above descriptions.

From the real point of view, it is interesting to recall the following Table from \cite[Prop. 5.1]{GH} for the theta characteristics of real quartics.

\begin{equation}\label{eq:tabletheta}\begin{array}{c|c|c}
\textrm{topological classification}&\textrm{\# real odd theta}&\textrm{\# real even theta}\\
\hline\\
\textrm{empty}&4&12\\
\textrm{one oval}&4&4\\
\textrm{two nested ovals}&4&12\\
\textrm{two non nested oval}&8&8\\
\textrm{three ovals}&16&16\\
\textrm{four ovals}&28&36\\
\end{array}\end{equation}

\subsection{Clebsch and L\"uroth quartics}\label{subsec:clelu}
A famous example is given by Clebsch quartics, which are the quartics which can be written as $f=\sum_{i=1}^5l_i^4$ (Waring decomposition), while the general quartic needs six summands consisting of $4$th powers of linear forms, in contrast with the naive numerical expectation.
Clebsch quartics can be detected
by a determinantal invariant of degree $6$ in the $15$ coefficients
of the quartic polynomial (description $1.$), called the catalecticant invariant or the Clebsch invariant.

Description of special quartics may be quite different depending on
the different descriptions we choose.
The archetypal example is that of L\"uroth quartics, which
by definition contain the $10$ vertices of a complete pentalateral, like in the picture
\begin{center}
\includegraphics[width=60mm]{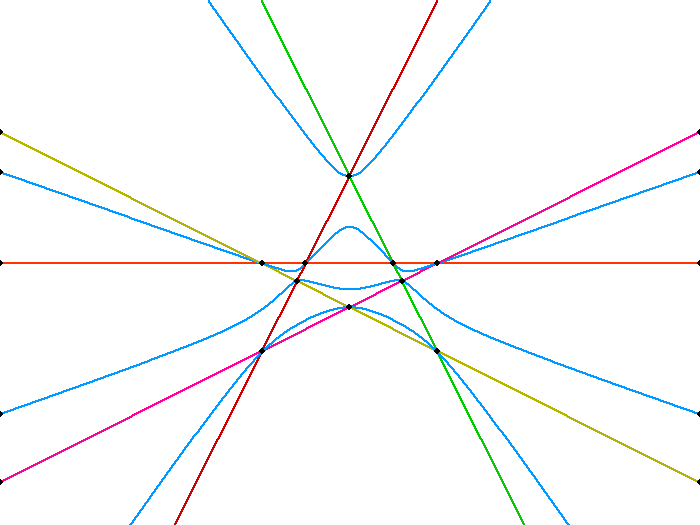}
\end{center}
The pentalateral defines in a natural way a particular even theta characteristic $\theta$, that it is called the pentalateral theta (the $10$ vertices of the pentalateral make the divisor $2K+\theta$).
The pair $(f,\theta)$ consisting of a L\"uroth quartic $f$
with a pentalateral theta $\theta$ can be described in a relatively easy way by 
looking at the determinantal representation (description $2.$): if the symmetric $4\times 4$ matrix
is $\sum_{i=0}^2x_iA_i$ then the condition of being L\"uroth with a pentalateral theta is expressed by the vanishing of the $6$th degree Pfaffian of the matrix
\begin{equation}\label{eq:paff6}\begin{pmatrix}0&A_2&-A_1\\
-A_2&0&A_0\\
A_1&-A_0&0\end{pmatrix},\end{equation}
see \cite[Theor. 4.1, Prop. 6.1]{Ot08}.
There is a second way to describe a L\"uroth quartic, if we can profit of the additional data of a Aronhold system of seven bitangents
(description $4.$). Indeed the seven points which give a L\"uroth quartic correspond
sursprisingly to the seven eigenvectors of a plane cubic.
See \S \ref{sec:eigentensors} for details, roughly this means that three exists a cubic polynomial $c$ such that in  (\ref{eq:mat23})
it holds $\frac{\partial c}{\partial l_i} = q_i$.
If we forget these additional data, it is tremendously difficult 
to detect a L\"uroth quartic looking just at it defining polynomial
(description $1.$).

L\"uroth quartic were studied deeply in the period 1860-1918, starting from
L\"uroth paper and their description in terms of net of quadrics, culminating with Morley brilliant description of the invariant in terms of the seven points,
showing finally that the degree of L\"uroth invariant, in terms of the fifteen coefficients of the quartic, is $54$.
The explicit expression of this degree $54$ invariant is challenging even nowadays with the help of a computer.  L\"uroth quartics
became again popular in 1977 when Barth showed that the jumping curve of a stable $2$-bundle on $\P^2$ with Chern classes $(c_1, c_2)=(0,4)$ is a L\"uroth quartic. LePotier and Tikhomirov showed in 2001 that the moduli space of the above bundles can be described in terms of L\"uroth quartics, the degree $54$ turned out to be a Donaldson invariant of $\P^2$.
\subsection{The Scorza map}
The Scorza map associates to a general quartic $f$ a pair $(S(f),\theta)$
where $S(f)$ is another quartic (the Aronhold covariant) and $\theta$ is a even theta characteristic on $S(f)$. Its precise definition needs the Aronhold invariant of plane cubics and it will be recalled in \S \ref{sec:scorza}. Its main property is the Theorem of Scorza that the map
$$f\mapsto (S(f),\theta)$$
 is dominant on the variety of pairs $(g,\theta)$ where $\theta$ is a even theta characteristic on the quartic $g$. In a previous paper, Scorza showed that if $f$ is Clebsch then $S(f)$ is L\"uroth with pentalateral $\theta$. This fact is the first step in the proof of the Theorem of Scorza. We will give a computational description of the Scorza map and its inverse, especially in Algorithms \ref{alg:sexticmodel} and \ref{alg:inversescorza}.

\subsection{Description of the content}
Sections from 2 to 6 describe a few basics about theta characteristics and Scorza map for plane quartics. The goal is to introduce the terminology to understand the algorithms, we refer to the literature for most of the proofs.  Some emphasis is given to Clebsch and L\"uroth quartics, which correspond through the Scorza map. More emphasis is given on the construction of a quartic from seven points, which give a Aronhold system of seven bitangents.
Section 7 is a brief survey about invariant theory of plane quartics.
Section 8 considers the link with the seven eigenvectors of a plane cubic, as sketched in \ref{subsec:clelu}. 
Section \ref{sec:alg} is the core of this paper and contains the Macaulay2\cite{GS} scripts.
These scripts are available as ancillary files of  the arXiv version of this paper
or by contacting the author. We have tried to use the verbatim text style to indicate
Macaulay2 input.

\subsection{Summary of the eight M2 scripts presented in \S \ref{sec:alg}}

\begin{enumerate}
\item{}
 INPUT: seven general lines $l_1,\ldots, l_7$

OUTPUT: the quartic having $l_1,\ldots, l_7$ as Aronhold system of bitangents

\item{} INPUT: a $2\times 3$ matrix with $2$-minors vanishing on $Z=\{l_1,\ldots, l_7\}$

 OUTPUT: the quartic having $Z$ as Aronhold system of bitangents

\item{} INPUT: seven general lines $l_1,\ldots, l_7$

 OUTPUT: a $8\times 8$ symmetric matrix (the bitangent matrix) collecting in each row the $8$ Aronhold systems of bitangents for the quartic having $l_1,\ldots, l_7$ as Aronhold system of bitangents, equivalent to $l_1,\ldots, l_7$. In particular, every principal $4\times 4$ minor of the bitangent matrix  gives a symmetric determinantal representation.

\item{} INPUT: a $2\times 3$ matrix with $2$-minors vanishing on $Z=\{l_1,\ldots, l_7\}$

 OUTPUT: a symmetric determinantal $4\times 4$ representation of the quartic having $Z$ as Aronhold system of bitangents

\item{} INPUT: a quartic $f$

OUTPUT: the image $S(f)$ through the Scorza map 

\item{} INPUT: a quartic $f$ and a point $q\in S(f)$

 OUTPUT: a determinantal representation of the image $(S(f),\theta)$ through the Scorza map 

\item{} INPUT: a determinantal representation of a quartic $g$ corresponding to $(g,\theta)$

 OUTPUT: the quartic $f$ such that the image $(S(f),\theta)$ through the Scorza map corresponds to $(g,\theta)$

{\it The tutorial contains a list of the $36$ Scorza preimages of the Edge quartic}

\item{} INPUT: a plane quartic $f$

 OUTPUT: the order of the automorphism group of linear invertible transformations which leave $f$ invariant
\end{enumerate}
The algorithms \ref{alg:edoardo}, \ref{alg:sexticmodel}, \ref{alg:inversescorza} are computationally expensive. I wonder if there
are simpler solutions and shortcuts, from the computational point of view.
\vskip 0.8cm

\subsection{Acknowledgements}
These notes were originally prepared for the Workshop on {\it “Plane quartics, Scorza map and related
topics”}, held in Catania, January 19-21, 2016. I warmly thank Francesco Russo for the idea and the choice of the topic and all participants for the stimulating atmosphere. Special thanks to Edoardo Sernesi and Francesco Zucconi for their very nice lectures \cite{Ser, Zuc} who gave the theoretical framework and allowed me to concentrate on the computational aspects. Algorithm \ref{alg:edoardo} was presented as an open problem in Catania, the idea for its solution, with the selection of two cubics and the two additional points were they vanish, is due to Edoardo Sernesi. I am deeply indebted to Edoardo and his insight for my understanding of plane quartics. Algorithm \ref{alg:inversescorza} arises from a question discussed with Bernd Sturmfels. These notes owe a lot to the computational point of view of the paper \cite{PSV} by D. Plaumann, B. Sturmfels and C. Vinzant. The topological classification of the $36$ Scorza preimages of the Edge quartic (see Algorithm \ref{alg:inversescorza}) was computed by Emanuele Ventura, after the workshop. Tha author is member of GNSAGA-INDAM.

\section{Apolarity, Waring decompositions}
Our base ring is $S^*(V)=\C[x_0,x_1,x_2]$. The dual ring of differential operators is
$S^*(V^\vee)=\C[\partial_0, \partial_1, \partial_2]$ with the action
satisfying $\partial_i(x_j)=\delta_{ij}$.

A differential operator $g\in S^*(V^\vee)$ such that $g\cdot f=0$
is called apolar to $f$.
Differential operators of degree $d$ can be identified with plane curves of degree $d$ in the dual plane. 

Apolarity is very well implemented in M2 by the command $\textrm{diff}$,
with the caveat that differential operators are written with the same variables
$x_i$ of the ring where they act.

\begin{definition}\label{def:polar}
Denote $P_a=\sum_i a_i\partial_i$. The polar of $f\in \mathrm{Sym}^dV$ at $a$ is $P_a(f)\in \mathrm{Sym}^dV$.
\end{definition}
If $f$ corresponds to the symmetric multilinear form
$f(x,\ldots, x)$ then $P_a(f)$ corresponds to the multilinear form
$f(a,x,\ldots, x)$.
It follows that after $d$ iterations we get
$$P_a^d(f)=\underbrace{P_a\circ\cdots\circ P_a}_{d\textrm{\ times}}(f) = d!f(a).$$

An example important for Morley construction is the following, which is discussed in \cite{OS1}.
Take a cubic $f$ with a nodal point $Q$.
Then $P_a(f)$ is the nodal conic consisting of the two nodal lines
making the tangent cone at $Q$.

Note that $f$ depends essentially on $\le 2$ variables (namely it is a cone)
if and only if there is a differential operator of degree $1$ apolar to $f$,
in this case we say that there is a line apolar to $f$.

The lines apolar to $f$ make the kernel of the contraction map
$$C^1_f\colon V^{\vee}\to\mathrm{Sym}^3V$$

In the same way, the conics apolar to $f$ make the kernel of the contraction map
$$C^2_f\colon\mathrm{Sym}^2V^{\vee}\to\mathrm{Sym}^2V$$
The map $C^2_f$ is called the middle catalecticant map and there are conics apolar to $f$
if and only if the middle catalecticant of $f$ vanishes.

In equivalent way, the Clebsch quartics $f$ of section \ref{subsec:clelu}
can be defined by the condition $\det C^2_f=0$.

\section{The Aronhold invariant of plane cubics}
References for this section: \cite{Ot,Stu,Do2,LO}.

The Aronhold invariant is the equation of the $SL(3)$-orbit of the Fermat cubic
$x^3+y^3+z^3$ in $\P^9=\P(\mathrm{ Sym}^3\C^3)$. By construction, it is an $SL(3)$-invariant in the $10$ coefficients of a plane cubic. In other words, it is the equation of the $3$-secant variety to
the $3$-Veronese embedding of $\P^2$, which is an hypersurface of degree $4$.

\begin{theorem}\cite[Ex. 1.2.1]{LO}, \cite[Theor. 1.2]{Ot}
Let $f(x,y,z)\in\mathrm{ Sym}^3\C^3$ be a homogeneous cubic polynomial. 
Let $C(f_x)$, $C(f_y)$, $C(f_z)$ be the three $3\times 3$ symmetric matrices
which are the Hessian of the three partial derivatives of $f$.

All the $8$-pfaffians of the $9\times 9$ skew-symmetric matrix
\begin{equation}\label{eq:aronhold}\begin{pmatrix}0&C(f_z)&-C(f_y)\\
-C(f_z)&0&C(f_x)\\
C(f_y)&-C(f_x)&0
\end{pmatrix}\end{equation}
coincide (up to scalar) with the Aronhold invariant. Note the beautiful analogy with (\ref{eq:paff6}).
\end{theorem}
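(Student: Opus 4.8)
The plan is to show that the $9\times 9$ skew-symmetric matrix in (\ref{eq:aronhold}) is, up to sign and the choice of coordinates, the natural bilinear map built from $f$, and then to identify its $8$-pfaffians with a known degree-$4$ $SL(3)$-invariant. First I would set up the linear-algebraic framework: the matrix in (\ref{eq:aronhold}) defines a skew form on $\C^3\otimes\C^3=V\otimes V$, whose blocks are indexed by a basis $x,y,z$ of $V$ (say the outer factor) and whose $(i,j)$ block is $C(f_{x_k})$ with $k$ completing $\{i,j\}$ with a sign, i.e.\ the structure is that of $\epsilon_{ijk}C(f_{x_k})$. Since $C(f_{x_k})$ is the Hessian of $\partial f/\partial x_k$, its entries are (constants times) the coefficients of $f$ itself; so the whole matrix is linear in the $10$ coefficients of $f$ and manifestly $SL(3)$-covariant, the outer $\C^3$ transforming in one representation and the inner $\C^3$ (carried by each $3\times 3$ block) in another. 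The key structural point is that this $\wedge^2(V\otimes V)$-valued object is exactly the one whose pfaffian theory computes the Aronhold invariant.

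Next I would bring in the geometric characterization: the Aronhold invariant is the equation of the hypersurface of degree $4$ in $\P^9=\P(\mathrm{Sym}^3\C^3)$ parametrizing cubics that lie on the $3$-secant variety of the $3$-Veronese of $\P^2$, equivalently the cubics projectively equivalent to the Fermat $x^3+y^3+z^3$. So I must prove two things: (a) the common value of the $8$-pfaffians is a polynomial of degree $4$ in the coefficients of $f$, and (b) it vanishes precisely on the $SL(3)$-orbit closure of the Fermat cubic. For (a): a $9\times 9$ skew-symmetric matrix has vanishing determinant, its $8$-pfaffians are the entries of (the sign-adjusted) adjugate-type object, each a polynomial of degree $4$ in the matrix entries; since the entries are linear in the coefficients of $f$, each $8$-pfaffian has degree $4$ in those coefficients. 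The fact that all the $8$-pfaffians \emph{coincide} up to scalar is where the real content sits: this is a rank statement saying that for every $f$ the matrix (\ref{eq:aronhold}) has corank $\ge 1$ (so all $8$-pfaffians are proportional to a single function) with equality generically. I would establish the corank-$1$-generically claim by either exhibiting an explicit vector in the kernel — the natural candidate coming from $f$ written as a sum of cubes, where each summand $\ell_i^3$ contributes a rank-one-type degeneracy — or by invoking the cited references \cite{Ot,LO} for the identification of (\ref{eq:aronhold}) with the known pfaffian representation of the $3$-secant variety.

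For (b), the cleanest route is representation-theoretic plus a direct check on one point of the orbit. The space of $SL(3)$-invariants of degree $4$ on $\mathrm{Sym}^3\C^3$ is one-dimensional (this is classical; it is exactly spanned by the Aronhold invariant, the equation of the $3$-secant variety, which is known to be a quartic hypersurface). Since each $8$-pfaffian is a degree-$4$ invariant of $f$ — invariance follows from the $SL(3)$-covariance of the matrix (\ref{eq:aronhold}) together with the fact that pfaffians of congruent skew matrices transform by the determinant of the conjugating matrix, which is $1$ for $SL$ — each $8$-pfaffian must be a scalar multiple of the Aronhold invariant (the scalar possibly zero, but non-zero once we know the pfaffian is not identically zero). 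To rule out the zero scalar and pin down everything it suffices to evaluate at a single convenient cubic, e.g.\ a generic member (where the matrix has exact corank $1$ and a chosen $8$-pfaffian is explicitly nonzero) together with the Fermat cubic (where one checks the pfaffian vanishes, matching the orbit description). I would verify the Fermat vanishing by direct computation: for $f=x^3+y^3+z^3$ the Hessians $C(f_x),C(f_y),C(f_z)$ are multiples of the diagonal matrices $\mathrm{diag}(x,0,0)$ etc.\ evaluated appropriately, the block matrix becomes very sparse, and its $8\times 8$ minors visibly degenerate.

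The hard part will be the claim that \emph{all} $8$-pfaffians agree up to a common scalar rather than merely that each is proportional to the Aronhold invariant — equivalently, controlling the locus where the corank of (\ref{eq:aronhold}) jumps above $1$ and showing it has codimension $\ge 2$, so that the ratios of pfaffians extend to constants. The representation-theoretic argument above sidesteps this by showing each individual $8$-pfaffian is an element of the one-dimensional space of quartic invariants, hence automatically a fixed scalar times the Aronhold invariant, with the scalars determined by a single numerical evaluation; the residual work is then the bookkeeping of exactly which pfaffians get which nonzero scalar, which I would either carry out by one explicit substitution or defer to the computation in \cite{Ot,LO}. The beautiful analogy with the Pfaffian (\ref{eq:paff6}) for L\"uroth quartics, noted in the statement, is not needed for the proof but serves as a sanity check on the shape of the construction.
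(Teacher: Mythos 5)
Your representation-theoretic shortcut has a genuine gap at its central step. You claim that each $8$-pfaffian of the $9\times 9$ matrix is individually an $SL(3)$-invariant of degree $4$, deducing this from ``pfaffians of congruent skew matrices transform by the determinant of the conjugating matrix.'' That rule applies to the \emph{full} pfaffian of an even-sized skew matrix, but it does not apply to the $8$-pfaffians of a $9\times 9$ matrix: those are pfaffians of the $8\times 8$ principal submatrices obtained by deleting one row and the matching column, and the deletion is basis-dependent. Under $M\mapsto P^{\mathsf T}MP$ the $8\times 8$ principal submatrices do \emph{not} transform by congruence among themselves. What is true is that, when $M$ has rank $8$, the vector of $8$-pfaffians (with alternating signs) spans $\ker M$, and under congruence the kernel transforms by $v\mapsto P^{-1}v$; so the $8$-pfaffians form a \emph{covariant} vector, not nine invariant scalars. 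For a generic skew matrix of this shape the nine $8$-pfaffians would be nine genuinely different (and non-invariant) quartics, so your argument proves too much and must be wrong somewhere.

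The missing structural fact — and the real content of the theorem — is that the radical of the form is always the \emph{same} fixed line, regardless of $f$. Concretely, writing the $(ia,jb)$ entry as $\varepsilon^{ijk}f_{abk}$ with $f_{abk}$ the symmetric tensor of third derivatives, the ``identity'' vector $\delta^{j}_{b}$ is annihilated: $\varepsilon^{ijk}f_{abk}\,\delta^{j}_{b}=\varepsilon^{ijk}f_{ajk}=0$ because $f_{ajk}$ is symmetric in $j,k$ while $\varepsilon^{ijk}$ is skew. (In the language of the paper this is the computation $A_{v^3}(I)(w)=(v\wedge v\wedge w)v=0$, extended to all $f$ by linearity.) Since that direction is an $SL(3)$-invariant line in $\mathrm{End}(\C^3)$, the kernel vector always points along it, and therefore the nine $8$-pfaffians are a \emph{fixed} set of constants (some of them zero, e.g.\ the off-diagonal positions) times one and the same degree-$4$ polynomial. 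Only after this step does your dimension count — $\dim\bigl(\mathrm{Sym}^4(\mathrm{Sym}^3\C^3)\bigr)^{SL(3)}=1$, spanned by the Aronhold invariant — kick in to identify that polynomial, together with a single nonvanishing evaluation at a generic cubic (not Fermat, where the invariant vanishes). You actually gesture at ``exhibiting an explicit vector in the kernel,'' but propose to find one coming from a sum-of-cubes decomposition of $f$; that gives a decomposition-dependent vector and does not by itself establish that the kernel direction is the same for all $f$, which is what the proportionality of the $8$-pfaffians requires. Replace that with the fixed identity vector and your argument closes. The paper itself gives no proof and simply cites \cite[Ex.\ 1.2.1]{LO} and \cite[Thm.\ 1.2]{Ot}, where this kernel observation is the basis of the construction.
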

Let $\mathrm{End}_0~\C^3$ be the space of traceless endomorphisms of $\C^3$.
The matrix (\ref{eq:aronhold}) describes\cite[\S 2]{Ot} the contraction

  $$A_{f}\colon\mathrm{End}_0~\C^3\to\mathrm{End}_0~\C^3,$$  which in the case $f=v^3$ satisfies

\begin{equation}\label{eq:eigen}A_{v^3}(M)(w)=\left(M(v)\wedge v\wedge w\right)v\quad\forall M\in \textrm{End}~\C^3,\quad\forall w\in \C^3.\end{equation}

\begin{theorem}[Nonabelian apolarity for plane cubics]\label{thm:nonab_cubics}
Let $f=l_1^3+l_2^3+l_3^3$  with $l_i$ not collinear linear forms.
Then $l_i$ are (symultaneous) eigenvectors of all the matrices $M\in\ker A_f\subset \textrm{End}_0~\C^3$.
This allows to recover $l_i$ from $f$.
\end{theorem}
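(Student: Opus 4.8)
The plan is to leverage the explicit formula \eqref{eq:eigen} for the operator $A_{v^3}$ together with the linearity of the assignment $f \mapsto A_f$. First I would use the fact that $f \mapsto A_f$ is linear (as it arises from a contraction, see \eqref{eq:aronhold}), so that $A_f = A_{l_1^3} + A_{l_2^3} + A_{l_3^3}$. Hence it suffices to understand, for each summand $l_i^3$, which endomorphisms $M$ satisfy $A_{l_i^3}(M) = 0$, and then intersect the three kernels. By the $SL(3)$-equivariance of the construction we may analyze $A_{v^3}$ for a single linear form $v$ using \eqref{eq:eigen}: $A_{v^3}(M)(w) = \bigl(M(v)\wedge v\wedge w\bigr)v$ for all $w$, where $\bigl(M(v)\wedge v\wedge w\bigr)$ is the scalar (determinant) obtained from the three vectors.

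The key computation is then: $A_{v^3}(M) = 0$ if and only if the scalar $M(v)\wedge v\wedge w$ vanishes for every $w$, which happens if and only if $M(v)\wedge v = 0$, i.e. $M(v)$ is proportional to $v$ — in other words $v$ is an eigenvector of $M$. So $\ker A_{l_i^3}$ (inside $\mathrm{End}\,\C^3$) consists precisely of the endomorphisms having $l_i$ as an eigenvector. Intersecting over $i=1,2,3$, an endomorphism $M$ lies in $\ker A_f$ if and only if each of $l_1, l_2, l_3$ is an eigenvector of $M$; restricting to the traceless part gives $\ker A_f \subset \mathrm{End}_0\,\C^3$. This proves the first assertion. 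For the recovery statement: since the $l_i$ are not collinear, they form a basis of $\C^3$, and a generic element $M$ of $\ker A_f$ — which in this basis is diagonal with generic distinct entries summing to zero — has exactly the three one-dimensional eigenspaces $\langle l_1\rangle, \langle l_2\rangle, \langle l_3\rangle$. Thus computing $\ker A_f$ and diagonalizing a general member recovers the $l_i$ up to scalar and permutation.

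The main obstacle is verifying that a \emph{general} $M \in \ker A_f$ has three \emph{distinct} eigenvalues, so that its eigenvectors are unambiguously the lines $\langle l_i\rangle$ and not some larger eigenspace. This requires knowing that $\ker A_f$ is exactly three-dimensional as a subspace of $\mathrm{End}_0$ (equivalently, that the diagonal traceless matrices in the $l_i$-basis are the \emph{only} common solutions, with no extra off-diagonal contributions), which in turn uses that the $l_i$ are in general position. Once $\dim \ker A_f = 2$ in the traceless space is established — it is the plane of traceless diagonal matrices in the $l_i$-basis — genericity of distinct eigenvalues is immediate, and the theorem follows. I would phrase the dimension count either by the direct linear-algebra argument in the $l_i$-coordinates or by invoking the known structure of the Aronhold contraction from \cite{Ot}.
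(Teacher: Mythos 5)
Your approach is essentially the paper's: compute $\ker A_{v^3}$ from \eqref{eq:eigen}, observe by linearity that any $M$ having all three $l_i$ as eigenvectors lies in $\ker A_f$, and then force equality by a dimension count. Two small imprecisions are worth flagging. First, you assert the biconditional ``$M\in\ker A_f$ if and only if each $l_i$ is an eigenvector of $M$'' and declare the first assertion proved \emph{before} establishing the dimension count; linearity only gives the containment $\bigcap_i\ker A_{l_i^3}\subseteq\ker A_f$ (a sum of the three terms can vanish without each summand vanishing), so the ``only if'' direction is exactly what the dimension count is for — you do recognize this in your final paragraph, but the earlier ``if and only if'' and ``this proves the first assertion'' overclaim. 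Second, you write that $\ker A_f$ should be ``exactly three-dimensional as a subspace of $\mathrm{End}_0$'' and then two sentences later say $\dim\ker A_f=2$ in the traceless space; the latter is correct ($\mathrm{End}_0$ is the traceless space, and traceless diagonal matrices in the $l_i$-basis form a $2$-dimensional space), so the ``three-dimensional'' must refer to diagonal matrices in all of $\mathrm{End}\,\C^3$ — worth cleaning up. With those fixed, the argument matches the one in the text.
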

\begin{proof} When $f=v^3$ the equation (\ref{eq:eigen}) shows that $Ker A_{v^3}=\{M\in End_0(\C^3)| v\textrm{\ is an eigenvector of\ }M\}$.
Hence we have the containment  $$Ker A_{f}\supseteq\{M\in End_0(\C^3)| l_i\textrm{\ are eigenvectors of\ }M\}$$ and equality holds since both spaces are $2$-dimensional.
\end{proof}\qed

\begin{remark} Let $M$ be a general matrix of $Ker A_{f}$. then $\ker A_f$ is obtained as the intersection of $\langle I, M, M^2\rangle$ with $\mathrm{End}_0~\C^3$.
In particular $l_i$ can be found as eigenvectors of $M$. An efficient implementation is via the numerical command \begin{verbatim}eigenvectors(sub(M,CC))\end{verbatim} of M2.
An alternative way to recover $l_i$ from $f$ is sketched in Proposition \ref{prop:hessian}. It is theoretically simpler but not so efficient from the computational point of view.
Moreover the technique of Theorem \ref{thm:nonab_cubics} can be generalized to other cases, for the case of general plane quintics see \cite[Algorithm 1]{OO}.

\end{remark}

The explicit expression of the Aronhold invariant $Ar$ (sometimes called also $S$ in the literature, that we cannot use to avoid ambiguity with the Scorza map) has $25$ monomials and it can be found in \cite{Stu} Prop. 4.4.7 or in 
\cite{DK} (5.13.1), or as output of the following M2 script

\begin{verbatim}
R=QQ[x,y,z,c_0..c_9]
x1=matrix{{x,y,z}}
x3=symmetricPower(3,x1)
f=(matrix{{c_0..c_9}}*transpose x3)_(0,0)
m=matrix{{0,z,-y},{-z,0,x},{y,-x,0}}
---following is 9*9 matrix
m9=diff(m,diff(x1,diff(transpose x1,f)))
---following is Aronhold invariant
aronhold=(mingens pfaffians(8,m9))_(0,0)
\end{verbatim}

By a slight abuse of notation we denote by $Ar$ also the corresponding multilinear form. The classical symbolic expression for $Ar$ is

$$Ar(x^3,y^3,z^3,w^3)=(x\wedge y\wedge z)(x\wedge y\wedge w)(x\wedge z\wedge w)(y\wedge z\wedge w).$$

\begin{prop}\label{prop:3orbits}\label{prop:closurefermat}
The closure of the orbit $SL(3)\cdot\left(x^3+y^3+z^3\right)$ contains the following three orbits
$$\begin{array}{c|c|c|c}
&\textrm{equation}&\textrm{dim}&\textrm{rk}\\
\hline\\
\textrm{Fermat}&x^3+y^3+z^3&8&3\\
\textrm{cuspidal}&y^2z-x^3&7&4\\
\textrm{smooth conic+tg line}&x(y^2-xz)&6&5
\end{array}$$
These are all the cubics with border rank three. Of course the closure contains also the cubics with border rank $\le 2$,
which make other three orbits.

The Aronhold invariant vanishes on the above three orbits and on all the cubics depending on
essentially one or two variables, they have border rank $\le 2$
and they are cones with a point as vertex.
\end{prop}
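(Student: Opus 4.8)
The plan is to work in $\P^9=\P(\mathrm{Sym}^3\C^3)$, where, by the characterization recalled above, $\{Ar=0\}$ is the $3$-secant variety $\sigma_3=\sigma_3(v_3(\P^2))$, an irreducible hypersurface of dimension $8$; its generic point is $[l_1^3+l_2^3+l_3^3]$ with $l_1,l_2,l_3$ independent, hence $SL(3)$-equivalent to $[x^3+y^3+z^3]$, so indeed $\overline{SL(3)\cdot(x^3+y^3+z^3)}=\sigma_3$. It therefore suffices to classify the $SL(3)$-orbits inside $\sigma_3$ --- that is, the projective equivalence classes of plane cubics $f$ with $Ar(f)=0$ --- and, for the three ``non-cone'' ones, to compute their dimensions and Waring ranks. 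The vanishing of $Ar$ on all six orbits below is then automatic (they lie in $\sigma_3$), and the last sentence of the statement records only, in addition, that a cubic depending on $\le 2$ variables is a binary cubic, hence of border rank $\le 2$ (the generic binary cubic has rank $2$), hence in $\sigma_2\subset\sigma_3$, and is visibly a cone with a point as vertex.

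For the classification I would use apolarity, in the spirit of \S\,2: for $\sigma_3(v_3(\P^2))$ the secant variety coincides with the cactus variety, so $f\in\sigma_3$ if and only if $f$ admits an apolar subscheme of length $\le 3$. One then runs over the finitely many $SL(3)$-orbits of subschemes of $\P^2$ of length $\le 3$ --- a reduced point; two points; a length-$2$ jet; three non-collinear points; three collinear points; a curvilinear length-$3$ scheme lying on a line, or not lying on a line; the non-curvilinear ($\mathfrak{m}^2$) length-$3$ fat point --- and, for each, identifies the cubic admitting it as its \emph{minimal} apolar scheme by a short computation with $f^\perp$. A reduced point gives $x^3$ (triple line, rank $1$). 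Two points, or three collinear points, force $f$ into $\mathrm{Sym}^3$ of a $2$-dimensional space, i.e.\ a binary cubic: with three distinct roots this is $x^3+y^3$ (three concurrent lines, rank $2$); the length-$2$ jet and the $\mathfrak{m}^2$ fat point give the binary cubic $x^2y$ (double line plus a line, rank $3$, border rank $2$), since $\partial_y^2f=\partial_y\partial_zf=\partial_z^2f=0$ forces $f=x^2\ell$. Three non-collinear points give the smooth Fermat cubic $x^3+y^3+z^3$ (rank $3$). A curvilinear length-$3$ scheme on a line gives again $x^3$; a length-$2$ jet together with a further point spanning the plane gives, after normalization, $x^2y+z^3$, which one checks is an irreducible cuspidal cubic, hence $\sim y^2z-x^3$ (rank $4$); and a curvilinear length-$3$ scheme spanning the plane gives $x(y^2-xz)$, a smooth conic plus a tangent line (rank $5$). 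This produces exactly six orbits; the three non-cone ones --- the Fermat, $y^2z-x^3$, $x(y^2-xz)$ --- are precisely those of border rank exactly $3$.

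To finish, one computes the three orbit dimensions from the stabilizers in $PGL(3)$. The Fermat cubic has finite stabilizer (the Hessian group), so its orbit has dimension $8$ and is dense in $\sigma_3$. The cuspidal cubic $y^2z-x^3$ is fixed projectively, up to a finite subgroup, exactly by the one-parameter torus $(x,y,z)\mapsto(t^2x,t^3y,z)$, giving orbit dimension $7$. The cubic $x(y^2-xz)$ is fixed by the $2$-dimensional subgroup of $\mathrm{PO}(y^2-xz)\cong PGL(2)$ stabilizing the contact point $[0:0:1]$ of the conic with its tangent line $x=0$ (such a transformation automatically preserves that tangent line), giving orbit dimension $6$; this matches the table.

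The hardest points, I expect, are two. First, to make the apolarity classification genuinely exhaustive one needs the non-trivial input that $\sigma_3(v_3(\P^2))$ equals the cactus variety $\kappa_3$; without it one must instead take the classical list of the nine projective types of plane cubics and verify, by evaluating the $25$-term expression of $Ar$ on a representative of each, that $Ar$ vanishes precisely on the Fermat, the cuspidal cubic, the conic-plus-tangent-line and the three cone types, and is nonzero on the nodal cubic, the triangle $xyz$ and the conic-plus-transversal-line. Second, pinning down the exact Waring ranks ($4$ for $y^2z-x^3$, $5$ for $x(y^2-xz)$) is a genuine, if routine, computation: the border-rank bound $\le 3$ is immediate from limit presentations such as $y^2z=\lim_{t\to 0}\frac1{6t}\bigl[(y+tz)^3-(y-tz)^3\bigr]$, but the exact rank is more delicate, the lower bounds resting on the fact that the non-cone cubics of rank $3$ form exactly the $8$-dimensional Fermat orbit.
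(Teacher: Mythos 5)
The paper does not give a proof of this Proposition: it is stated and immediately followed by an application, consistently with the author's disclaimer that ``we refer to the literature for most of the proofs'' (references at the head of \S 3 include Dolgachev, Sturmfels, and \cite{LO,Ot} where the orbit list and the Aronhold invariant are treated). So there is no in-text argument to compare against; I'll assess your reconstruction on its own merits.

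Your approach --- identify $\{Ar=0\}=\sigma_3(v_3(\P^2))$, observe that the dense orbit is the Fermat orbit, then classify the $SL(3)$-orbits in $\sigma_3$ either via apolar length-$\le 3$ schemes or via the classical list of nine cubic types, and finally compute stabilizer dimensions --- is sound and yields exactly the six orbits and the dimensions $8,7,6$. Two refinements. First, the worry you raise about $\kappa_3=\sigma_3$ can be dispatched cleanly: $\Hilb^3(\P^2)$ is smooth and irreducible of dimension $6$ and each length-$3$ scheme $Z$ spans a $\le 2$-plane $\langle v_3(Z)\rangle$, so $\kappa_3$ is irreducible of dimension $\le 8$; since it contains the irreducible hypersurface $\sigma_3$, equality $\kappa_3=\sigma_3$ follows. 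With this in hand the apolarity route is rigorous and, to my mind, cleaner than brute evaluation of the $25$-term $Ar$ on nine representatives. Second, the explicit enumeration of length-$3$ subschemes you give omits the case of a length-$2$ jet together with a reduced point lying on the tangent line of the jet; that scheme is contained in a line, so it only contributes a cone cubic of border rank $\le 2$ and does not affect the three border-rank-$3$ orbits, but you should list it for exhaustiveness. The one genuine remaining gap is the lower bound $\mathrm{rk}\ge 5$ for the conic-plus-tangent-line $x(y^2-xz)$: the observation that rank-$3$ cubics are only the Fermat and the $x^2y$ orbit gives $\mathrm{rk}\ge 4$ for both the cuspidal cubic and $x(y^2-xz)$, which pins down $\mathrm{rk}(y^2z-x^3)=4$, but ruling out rank $4$ for $x(y^2-xz)$ requires a further (classical but not automatic) argument, e.g.\ via the apolar ideal or the known fact that the maximum Waring rank of a ternary cubic is $5$ and is attained only on this orbit. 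You flag this honestly, but as written it is the one place the proof is not self-contained.
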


In the classical terminology, a Fermat cubic $l_1^3+l_2^3+l_3^3$
has a polar $3$-lateral given by $l_1l_2l_3$. A recipe to compute the
polar $3$-lateral in the first case of Prop. \ref{prop:closurefermat}, alternative to Theorem \ref{thm:nonab_cubics}
is given by the case (i) of the following Proposition.
\begin{prop}\label{prop:hessian}

(i) The Hessian of $l_1^3+l_2^3+l_3^3$ factors as $l_1l_2l_3$.

(ii) The Hessian of a cuspidal cubic splits as $l_1^2l_2$ where $l_1^2$
is the tangent cone at the cusp.

(iii) The Hessian of $\{\textrm{smooth conic}\}\cup\{\textrm{tg line\ } l\}$
is $l^3$.
\end{prop}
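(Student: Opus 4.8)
The plan is to reduce each of the three cases to the corresponding normal form of Proposition \ref{prop:closurefermat} and then to compute the $3\times 3$ Hessian determinant directly. Here ``Hessian'' means the polynomial $\mathrm{Hess}\, f = \det\left(\partial_i\partial_j f\right)$; since the entries are linear, this is again a cubic. The reduction rests on the equivariance of the Hessian: if $g(x)=f(Ax)$ for $A\in GL(3)$, then the matrix of second partials of $g$ at $x$ equals $A^{\mathrm{t}}\,(\partial_i\partial_j f)(Ax)\,A$, so $\mathrm{Hess}\,g=(\det A)^2\,(\mathrm{Hess}\,f)\circ A$. Consequently the factorization type of the Hessian polynomial (a product of three independent linear forms; a linear form times the square of a linear form; the cube of a linear form) is a projective invariant, and the distinguished linear factors transform exactly like the geometric data attached to $f$ — the summands $l_i$ of a Waring decomposition, the tangent cone at the cusp, the tangent line to the conic. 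So it suffices to verify each statement on one representative of each $SL(3)$-orbit.

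For (i), independence of $l_1,l_2,l_3$ gives $A\in GL(3)$ with $f=(x_0^3+x_1^3+x_2^3)\circ A$ and $l_i=x_{i-1}\circ A$; since the Hessian matrix of $x_0^3+x_1^3+x_2^3$ is $6\,\mathrm{diag}(x_0,x_1,x_2)$, we get $\mathrm{Hess}(x_0^3+x_1^3+x_2^3)=216\,x_0x_1x_2$, hence $\mathrm{Hess}\,f=216(\det A)^2\,l_1l_2l_3$. For (ii), every cubic in the cuspidal orbit is $SL(3)$-equivalent to $y^2z-x^3$, whose cusp is $[0:0:1]$ with tangent cone $\{y^2=0\}$; the $3\times 3$ determinant of its matrix of second partials is $24\,xy^2$, so the square factor is precisely the tangent cone. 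For (iii), every cubic in the remaining orbit is equivalent to $x(y^2-xz)$, where $\{x=0\}$ meets the conic $\{y^2=xz\}$ doubly at $[0:0:1]$ and hence is tangent to it, and one computes the Hessian to be $-8\,x^3$. Transporting these three identities back by the equivariance above, together with the projective naturality of ``tangent cone at the cusp'' and ``tangent line to the conic'', yields the proposition.

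There is no genuine obstacle here; the only point that deserves care is to confirm that the repeated linear factor really is the tangent cone (resp.\ the tangent line) and not an artifact of the chosen normal form, which is exactly what the equivariance statement guarantees, since $SL(3)$ preserves the geometric labelling of the factors. One could instead prove all three cases simultaneously by noting that these are precisely the non-conical cubics of border rank three and running a single degeneration argument from the Fermat cubic, letting $\mathrm{Hess}(x_0^3+x_1^3+x_2^3)=216\,x_0x_1x_2$ specialize; but the case-by-case computation above is shorter and entirely elementary.
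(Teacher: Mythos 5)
Your proof is correct, and the computations check out: the Hessian of $x_0^3+x_1^3+x_2^3$ is $216\,x_0x_1x_2$, that of $y^2z-x^3$ is $24\,xy^2$ with $y^2$ the tangent cone at the cusp $[0:0:1]$, and that of $x(y^2-xz)$ is $-8x^3$ with $\{x=0\}$ tangent to the conic at $[0:0:1]$; the $GL(3)$-equivariance $\mathrm{Hess}(f\circ A)=(\det A)^2\,(\mathrm{Hess}\,f)\circ A$ and the projective naturality of the labelled geometric data then give the general case. The paper states this proposition without proof, so there is nothing to compare against; your argument is exactly the standard one (explicit computation on one representative per orbit, transported by equivariance), and it fills the gap cleanly.
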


\begin{prop}{Real Fermat cubics}\cite{Ba}
The real cubics with complex border rank three make four $SL(3,\RR)$-orbits,
the Fermat case in Prop. \ref{prop:3orbits} splits into the two cases

(i') orbit of $x^3+y^3+z^3$, real Fermat

(i'') orbit if $(x+\sqrt{-1}y)^3+(x-\sqrt{-1}y)^3+z^3=2x^3-6x^2y+z^3$, imaginary Fermat
\end{prop}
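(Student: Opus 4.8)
\emph{Proof proposal.}
The plan is to work through the three complex orbits of Proposition~\ref{prop:3orbits} --- the Fermat orbit $O_F$ (rank $3$), the cuspidal orbit $O_c$ (rank $4$) and the ``smooth conic $+$ tangent line'' orbit $O_{cl}$ (rank $5$), which together exhaust the cubics of complex border rank exactly three --- and to count the $SL(3,\RR)$-orbits lying in the real locus of each. (On $\P^9=\P(\mathrm{Sym}^3\C^3)$ the $SL(3)$-action factors through $PGL(3)$, and $SL(3,\RR)\to PGL(3,\RR)$ is surjective because $-1$ is a real cube, so one may work with $PGL(3)$.) The organizing tool is Galois descent: since $H^1(\mathrm{Gal}(\C/\RR),PGL(3,\C))$ is trivial --- it classifies central simple $\RR$-algebras of the odd degree $3$, of which only the split one exists --- the $PGL(3,\RR)$-orbits inside the real locus of one complex orbit $O=PGL(3,\C)/\Gamma$ that contains a rational point are in bijection with the pointed set $H^1(\mathrm{Gal}(\C/\RR),\Gamma)$, $\Gamma$ being the stabilizer of that real point. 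This is what makes the enumeration exhaustive and explains a priori why the degenerate orbits do not split.

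\emph{The orbits $O_c$ and $O_{cl}$.} Their stabilizers are connected solvable, hence have trivial Galois cohomology, so each contributes one real orbit. Indeed the stabilizer of $y^2z-x^3$ is the split torus $[x:y:z]\mapsto[x:a^{-1}y:a^2z]$ (dimension $8-7$; translations of the cuspidal cubic are not projective-linear, so nothing is missed), and that of $x(y^2-xz)$ is the Borel subgroup of $PGL(2)\cong\mathrm{Aut}(\text{conic})$ fixing the point of tangency (dimension $8-6$). Equivalently, and without cohomology: in a real member of $O_c$ the cusp and the cuspidal tangent are Galois-fixed, hence real, and the classical normal form of a real cuspidal cubic with real cusp is $y^2z-x^3$; in a real member of $O_{cl}$ the linear component is the unique line in the support and is real, while the residual smooth conic is real and carries the (real, Galois-fixed) tangency point, hence has a real point and is projectively standard, giving the normal form $x(y^2-xz)$.

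\emph{The orbit $O_F$.} Let $f$ be a real cubic of complex rank $3$. The contraction $A_f$ of~\eqref{eq:aronhold} is defined over $\RR$, so $\ker A_f$ is a real pencil of matrices, and by Theorem~\ref{thm:nonab_cubics} its three common eigenlines $\{[l_1],[l_2],[l_3]\}\subset\P^2$ are \emph{canonically} attached to $f$, hence form a Galois-stable unordered triple; they are non-collinear because $f\in O_F$. A group of order $2$ acting on a $3$-element set has a fixed point, so either (i) all three $[l_i]$ are real, or (ii) exactly one is real and the other two are swapped by conjugation. In case (i) one has $f=c_1l_1^3+c_2l_2^3+c_3l_3^3$ with real independent $l_i$ and, by uniqueness of the decomposition, real $c_i$; absorbing the real scalars into the $l_i$ presents $f$ as a sum of cubes of three independent real forms, so $f$ is $SL(3,\RR)$-equivalent to $x^3+y^3+z^3$. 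In case (ii), writing the conjugate pair as $m,\bar m$ and the real line as $r$, after absorbing coefficients $f=r^3+m^3+\bar m^3$; splitting $m$ into real and imaginary parts and applying a real linear change shows $f$ is $SL(3,\RR)$-equivalent to $(x+\sqrt{-1}\,y)^3+(x-\sqrt{-1}\,y)^3+z^3=2x^3-6xy^2+z^3$. The two cases are genuinely distinct over $\RR$: an element of $SL(3,\RR)$ carries the canonical triple of $f$ to that of its image and commutes with conjugation, so the number of real members of the triple --- three in case (i), one in case (ii) --- is an invariant. (In descent terms this records the fact that $H^1(\mathrm{Gal}(\C/\RR),\Gamma)$ for the projective automorphism group $\Gamma\cong(\mathbb{Z}/3)^2\rtimes S_3$ of the Fermat cubic, with conjugation acting by inversion on $(\mathbb{Z}/3)^2$ and trivially on $S_3$, consists of exactly the classes of the identity and of a transposition.) Hence $O_F$ contributes two real orbits and $O_c,O_{cl}$ one each, giving four in all.

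\emph{Where the difficulty lies.} The substantive step is $O_F$: using Theorem~\ref{thm:nonab_cubics} to identify the apolar triple with the unique Waring decomposition, and then tracking the Galois action carefully enough both to see that only the two cases (i), (ii) occur and to land, after a real linear change, on exactly the two representatives in the statement --- equivalently, the nonabelian $H^1$ bookkeeping for $\Gamma$. The orbits $O_c,O_{cl}$ are routine once one has verified that their stabilizers are connected, the only point needing care being to rule out extra components (in particular, that the cuspidal stabilizer is just the torus above).
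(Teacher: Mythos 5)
The paper states this proposition without proof, attributing it directly to Banchi \cite{Ba}, so there is no argument in the paper to measure your proposal against; it must be judged on its own, and on its own it is correct. Your reduction from $SL(3,\RR)$- to $PGL(3,\RR)$-orbits via real cube roots is right, as is the overall descent framework (triviality of $H^1(\mathrm{Gal}(\C/\RR),PGL_3(\C))$). For the cuspidal and conic-plus-tangent orbits the direct normalizations you supply (real cusp and real cuspidal tangent in the first; real line, real conic, and real tangency point in the second, the tangency point being Galois-fixed so the conic has a real point) are complete and agree with the vanishing of $H^1$ for the split connected solvable stabilizers you exhibit. The real content is the Fermat orbit, and your treatment is the natural one: Theorem~\ref{thm:nonab_cubics} makes the unordered triple of lines canonically attached to $f$, hence Galois-stable; an involution on a $3$-element set fixes either one or three of them; the coefficients $c_i$ are absorbed using the fact that every real number has a real cube root; and the number of real lines in the canonical triple is a $PGL(3,\RR)$-invariant that separates the two resulting representatives. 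The parenthetical nonabelian $H^1(\Gamma)$ bookkeeping is consistent with, but not needed for, this direct argument, which is the cleaner route. One byproduct worth recording: $(x+\sqrt{-1}y)^3+(x-\sqrt{-1}y)^3=2x^3-6xy^2$, so the expansion $2x^3-6x^2y+z^3$ printed in the proposition is a typo for $2x^3-6xy^2+z^3$, which is exactly what your computation produces.
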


\section{Three descriptions of an even theta characteristic}

\subsection{The symmetric determinantal description}\label{subsec:symmdet}
Let $A_0$, $A_1$, $A_2$ be three symmetric $4\times 4$ matrices.
The expression $$\det\left(xA_0+yA_1+zA_2\right)$$ defines a plane quartic
with a even theta characteristic $\theta$ given by
$$0\rig{}\OO_{\P^2}(-2)^4\rig{A}\OO_{\P^2}(-1)^4\rig{}\theta\rig{}0$$

As an example, the Edge quartic
\begin{equation}f=25(x^4+y^4+z^4)-34(x^2y^2+x^2z^2+y^2z^2)\label{eq:edge}\end{equation}
studied by Edge in \cite[\S 14]{Edge} (set $\gamma=-2$ )
has the symmetric determinantal representation found by Edge
$$\bgroup\begin{pmatrix}0&
     x+2 y&
     2 x+z&
     y-2 z\\
     x+2 y&
     0&
     y+2 z&
     -2 x+z\\
     2 x+z&
     y+2 z&
     0&
     x-2 y\\
     y-2 z&
     -2 x+z&
     x-2 y&
     0\\
     \end{pmatrix}\egroup$$


We refer to \cite{Ser,PSV} for the correspondence between the $28$ bitangents
of the quartic and the lines joining the eight base points of the net of quadrics. The seven lines joining one base point with the other seven base points make a Aronhold system of bitangents,
according to Def. \ref{def:aro_system}.
The seven corresponding points in $\P^3$ are Gale dual (see \cite{PSV,EP})
of the seven bitangents.

\subsection{The sextic model}
Let $K$ be the canonical bundle over a smooth plane quartic. The line bundle $K+\theta$ has $4$ independent sections which give a linear system of effective divisors of degree $6$.
It gives an embedding of the curve as a degree $6$ (and genus $3$) curve in $\P^3=\P\left(H^0(K+\theta)^\vee\right)$.
It has the resolution
\begin{equation}\label{eq:sextic}0\rig{}\OO_{\P^3}(-3)^3\rig{M}\OO_{\P^3}(-2)^4\rig{g}\OO_{\P^3}(1)\rig{}K+\theta\rig{}0\end{equation}
The symmetric determinantal description in \ref{subsec:symmdet} gives a $4\times 4\times 3$-tensor that we considered as a $4\times 4$ symmetric matrix with three linear entries.
The tensor has a second flattening as the $4\times 3$ matrix $M$
with four linear entries. The four  maximal minors of $M$ give the cubic equations of the sextic and define the map $g$
in (\ref{eq:sextic}).
The Edge quartic (\ref{eq:edge}) gives the following
$$M=\bgroup\begin{pmatrix}{u}_{1}+2 {u}_{2}&
      2 {u}_{1}+{u}_{3}&
      {u}_{2}-2 {u}_{3}\\
      {u}_{0}-2 {u}_{3}&
      2 {u}_{0}+{u}_{2}&
      2 {u}_{2}+{u}_{3}\\
      2 {u}_{0}+{u}_{3}&
      {u}_{1}-2 {u}_{3}&
      {u}_{0}+2 {u}_{1}\\
      -2 {u}_{1}+{u}_{2}&
      {u}_{0}-2 {u}_{2}&
      -2 {u}_{0}+{u}_{1}\\
      \end{pmatrix}\egroup$$

The isomorhism between the quartic model with coordinates $(x,y,z)$ and the sextic model with coordinates $(u_0,\ldots, u_3)$
is guaranteed by the system \begin{equation}\label{eq:bisystem}M(u)\cdot(x,y,z)^t=0.\end{equation} Given $u$ such that $\mathrm{rk} M(u)=2$,  the system 
(\ref{eq:bisystem}) defines a unique $(x,y,z)$.
Conversely, given $(x,y,z)$ on the quartic plane model, the system (\ref{eq:bisystem})
defines a unique $(u_0,\ldots, u_3)$ in the sextic space model.

The M2 code to construct $M$ from $A$ is the following

\begin{verbatim}
R=QQ[x,y,z,u_0..u_3]
A= matrix({{0, x + 2*y, z + 2*x, -2*z + y}, 
  {x + 2*y, 0, 2*z + y, z - 2*x}, 
	  {z + 2*x, 2*z + y, 0, x - 2*y}, 
	  {-2*z + y, z - 2*x, x - 2*y, 0}});
f=det(A)
uu=transpose matrix {{u_0..u_3}}
M=diff(x,A)*uu|diff(y,A)*uu|diff(z,A)*uu
\end{verbatim}

An elegant and alternative way to construct the divisor $K+\theta$ is the following

\begin{theorem}[Dixon]\label{thm:dixon}
Given a symmetric determinantal representation $A$, the four principal minors define
four cubics which are contact cubics, namely they cut the quartic $\det(A)$ in a nonreduced divisor
supported on a degree six divisor which is $K+\theta$.

More generally, if $A^{adj}$ is the adjugate matrix,
$u^tA^{adj}u$ parametrizes contact cubics, making a $3$-fold of degree $8$ in $\P^9$.
\end{theorem}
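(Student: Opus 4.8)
The plan is to work locally and reduce the statement to a linear algebra identity about the adjugate matrix. Let $A = xA_0 + yA_1 + zA_2$ be the symmetric determinantal representation, write $f = \det(A)$, and recall the classical identity $A \cdot A^{\mathrm{adj}} = A^{\mathrm{adj}} \cdot A = f \cdot \mathrm{Id}_4$. First I would set up the four principal minors: for each index $i \in \{1,2,3,4\}$, let $\Delta_i$ denote the $(i,i)$-entry of $A^{\mathrm{adj}}$, which is (up to sign, but for a principal minor it is $+$) the $3 \times 3$ minor obtained by deleting row $i$ and column $i$ from $A$. Each $\Delta_i$ is a cubic in $x,y,z$. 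The claim to be proved is that the scheme-theoretic intersection $\{f = 0\} \cap \{\Delta_i = 0\}$ is everywhere nonreduced, i.e.\ equal to $2D_i$ for an effective divisor $D_i$ of degree $6$, and moreover that the class of $D_i$ as a divisor on $C = \{f=0\}$ is $K + \theta$ independently of $i$.

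The key computation is the following: I claim that on the curve $C$, the cubic $\Delta_i$ restricted to $C$ is, up to a unit, a perfect square in the local ring at each point. This follows from the adjugate identity. At a point $p \in C$ where $A(p)$ has rank exactly $3$, the matrix $A^{\mathrm{adj}}(p)$ has rank $1$, hence is of the form $v v^t$ for some column vector $v$ (symmetry is crucial here — this is where symmetry of $A$ enters). Therefore $\Delta_i(p) = (A^{\mathrm{adj}}(p))_{ii} = v_i^2$. Upgrading this from a pointwise statement to a statement of ideals: along $C$ one has the matrix identity $A \cdot A^{\mathrm{adj}} \equiv 0 \pmod f$, so modulo $f$ the columns of $A^{\mathrm{adj}}$ all lie in the kernel of $A$, which is generically a line; writing $A^{\mathrm{adj}} \equiv w \cdot \ell^t$ modulo $f$ for suitable vectors (this requires the cokernel bundle to be a line bundle, which is exactly the hypothesis that $\theta$ is a theta characteristic and $C$ is smooth), symmetry forces $w$ and $\ell$ to be proportional, whence each diagonal entry $\Delta_i \equiv (\text{section})^2 \pmod f$. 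This exhibits $\Delta_i|_C = 2 D_i$ with $D_i$ the zero divisor of a section of a line bundle $L$ satisfying $L^{\otimes 2} = \OO_C(\Delta_i) = \OO_C(3) = 3K/... $ — more precisely $\OO_C(\Delta_i)$ has degree $12$, so $\deg D_i = 6$, and $2 D_i \sim 3K$ where $K = \OO_C(1)$ is the hyperplane class, giving $2(D_i - K) \sim K$, i.e.\ $D_i - K$ is a theta characteristic; identifying it with $\theta$ comes from comparing with the resolution $0 \to \OO(-2)^4 \xrightarrow{A} \OO(-1)^4 \to \theta \to 0$, since the section $v$ of $\theta(1) = K + \theta$ extracted above is exactly a section coming from the cokernel presentation.

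For the second, more general assertion, I would observe that the same argument applies verbatim to $u^t A^{\mathrm{adj}} u = \sum_{i,j} u_i u_j (A^{\mathrm{adj}})_{ij}$ for a parameter vector $u \in \C^4$: modulo $f$ this equals $(u \cdot v)^2$ where $v$ is the generator of $\ker A$, hence it is a contact cubic for every $u$, cutting out $2 D_u$ with $D_u \sim K + \theta$. The map $u \mapsto [u^t A^{\mathrm{adj}} u] \in \P^9$ is quadratic in $u$, hence factors through $\P^3 = \P(\C^4) \dashrightarrow \P^9$ via the Veronese, and its image is the projection of the Veronese surface from the linear span of the kernel of the linear map $\mathrm{Sym}^2 \C^4 \to \mathrm{Sym}^3 \C^3$ given by $u_iu_j \mapsto (A^{\mathrm{adj}})_{ij}$; a dimension count (the Veronese of $\P^3$ has degree $8$ in $\P^9$, and the map is generically injective for generic $A$ because the contact cubics through a fixed $K+\theta$ form a net... careful here) shows the image is a threefold of degree $8$. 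The main obstacle I anticipate is precisely this last step: passing from the pointwise rank-one factorization of $A^{\mathrm{adj}}$ to a genuine \emph{global} section of $K + \theta$ and checking that the resulting theta characteristic really is the one defined by the cokernel of $A$ (rather than some translate), and separately, pinning down that the degree of the image threefold is exactly $8$ and that the parametrization has the expected generic fiber — this needs either a transversality argument or an appeal to the known geometry of the variety of contact cubics, and it is where I would expect to invoke results of Dixon or the references \cite{Ser, PSV} rather than argue from scratch.
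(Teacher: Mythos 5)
The paper states this theorem without proof (attributing it to Dixon), and the paragraph that follows describes the \emph{reverse} construction (recovering $A$ from the sextic model), so there is no in-paper argument to compare against. Your approach is the correct and classical one: along $C$ the symmetric matrix $A$ has rank exactly $3$, so $A^{\mathrm{adj}}|_C$ has rank $1$, and a symmetric rank-one matrix factors as $vv^{t}$; the globalization then exhibits $v$ as a vector of sections of a line bundle $M$ with $M^{\otimes 2}\cong\OO_C(3)$, so each $\Delta_i|_C=v_i^2$ and, more generally, $u^tA^{\mathrm{adj}}u|_C=(u\cdot v)^2$, are squares. Your degree count $2D_i\sim 3K$, hence $D_i-K$ a theta characteristic, is correct. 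The two points you flag as delicate are indeed the only real work, and both close cleanly by standard bookkeeping.

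For the identification $M\cong K+\theta$ (rather than $K+\theta'$ for some other theta characteristic): tensor the resolution $0\to\OO(-2)^4\xrightarrow{A}\OO(-1)^4\to\theta\to 0$ with $\OO_C$ to get the four-term exact sequence
\[
0\to\theta(-4)\to\OO_C(-2)^4\xrightarrow{A|_C}\OO_C(-1)^4\to\theta\to 0,
\]
so $\ker(A|_C)\cong\theta-4K$. Since $AA^{\mathrm{adj}}=fI$, the image of $A^{\mathrm{adj}}|_C$ lands in the appropriate twist of $\ker(A|_C)$, which works out to be $\theta$ itself; on the other hand the rank-one factorization shows that image is $\cong M-K$. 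Both have degree $2$ on $C$, so the inclusion is an isomorphism and $M=K+\theta$. For the degree-$8$ claim, you correctly reduce to showing that the linear map $\mathrm{Sym}^2\C^4\to\mathrm{Sym}^3\C^3$, $u_iu_j\mapsto(A^{\mathrm{adj}})_{ij}$, is an isomorphism (both sides have dimension $10$); restricting to $C$ via $H^0(\P^2,\OO(3))\cong H^0(C,\OO_C(3))$ identifies it with the multiplication map $\mathrm{Sym}^2H^0(K+\theta)\to H^0(2K+2\theta)$, which is injective exactly when the sextic model of $C$ in $\P^3$ lies on no quadric. The resolution (\ref{eq:sextic}) shows the sextic is cut out by cubics, so there is no such quadric, the map is an isomorphism, and the image is the full $2$-uple Veronese of $\P^3$, of degree $2^3=8$. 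With those two lemmas supplied your argument is complete and, as far as I can tell, is \emph{the} proof rather than an alternative to one.
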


From the sextic model, one finds four independent sections $u_i\in H^0(K+\theta)$.
Then the sections $u_iu_j$ may be lifted to cubics in $\P^2$, with proper scaling guaranteed by the equation $(u_i+u_j)^2=u_i^2+u_j^2+2u_iu_j$.
Then the adjugate of the $4\times 4$ matrix $\left( u_iu_j\right)$
has degree $9$ polynomials which contain $f^2$ as a factor, after dividing by $f^2$
we get the corresponding symmetric determinantal description. 

\begin{remark} Dixon Theorem \ref{thm:dixon} generalizes to $d\times d$ symmetric representation,
in this case the degree $d(d-1)/2$ divisor, which is the support of the nonreduced divisor, is $H+\theta$.
\end{remark}

\begin{remark}\label{rem:symm} Note that a general $4\times 3$ matrix $M$ with linear entries in $u_0,\ldots, u_3$ defines a sextic curve of genus $3$,
but the line bundle which gives the embedding has the form $K+L$, with $L$ a degree $2$ line bundle which is not necessarily a theta characteristic.
The condition to be a theta characteristic is equivalent to the fact that the $4\times 4$ matrix with entries in $x_0,\ldots, x_2$ obtained by flattening
$M$ may be symmetrized by row/columns operations.\end{remark}

\subsection{The $(3,3)$-correspondence}\label{33corr}
References for this section: \cite{D,DK}.
Let $\theta$ be an even theta characteristic on $C$ of genus $3$.
It is defined a $(3,3)$ correspondence from the following divisor on $C\times C$
\begin{equation}\label{eq:ttheta}
T_\theta=\{(P,Q)\in C\times C| h^0(\theta+P-Q)>0\}.\end{equation} It follows from Serre duality that
the correspondence is symmetric. By Riemann-Roch, $\forall P\in C$, $\theta+P$ is linearly equivalent to a unique effective divisor
of degree $3$, hence we get a $(3,3)$ correspondence.

\begin{remark} An analogous $(g,g)$ correspondence may be defined starting from a general line bundle $L$ of degree $g-1$,
but this correspondence is in general not symmetric, the symmetry is guaranteed from $L$ being a theta characteristic (see Remark \ref{rem:symm}).
\end{remark}

In the sextic model the correspondence has the following form:
pick coordinates $(u_0,\ldots, u_3), (v_0,\ldots, v_3)$
and we have, for any $(u,v)\in C\times C$
$$(u,v)\in T_\theta\Longleftrightarrow v\cdot M(u)=0.$$

This works because $\ker M(u)$ is a $3$-secant line of the sextic model (see \cite{Sc}).


\section{The Aronhold covariant of plane quartics and the Scorza map.}\label{sec:scorza}
Main references for this section are \cite{DK,Ot08, OS1}. 
Recall by Definition \ref{def:polar} that $P_xf$ is the polar of $f$ at the point $x$.
The Scorza map is defined as $$f\mapsto (S(f),\theta),$$
where $S(f)=\{x\in \P^2|Ar(P_xf)=0\}$ is the Aronhold covariant (the notation with the letter $S$ goes back to Clebsch
and has nothing to do with Scorza,
see \S 7 in Ciani's monograph \cite{Cia})
and the $(3,3)$-correspondence $T_\theta$ which encodes $\theta$ as in (\ref{eq:ttheta}) is defined by
$$\{(x,y)\in\P^2\times\P^2|\textrm{rk}(P_xP_yf)\le 1\}.$$

The main result regarding the Scorza map is the following Theorem, proved by Scorza in 1899\cite{Sc}.

\begin{theorem}[Scorza]\label{thm:scorza}
The map $f\mapsto S(f)$ is dominant and is generically $36:1$.
\end{theorem}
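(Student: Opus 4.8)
The plan is to establish the two halves of the statement separately: first that $f \mapsto S(f)$ is dominant onto $\P^{14}$, and then that the generic fiber has cardinality $36$. For dominance, I would argue by a dimension count together with a non-degeneracy check: since both source and target are $\P^{14}$ (the space of plane quartics), it suffices to exhibit one quartic $f$ at which the differential $dS$ is surjective, equivalently invertible. Rather than compute this Jacobian directly, the cleaner route is to invoke Scorza's earlier result, recalled in \S\ref{subsec:clelu}, that a Clebsch quartic $f$ (one with $\det C^2_f = 0$) maps to a L\"uroth quartic $S(f)$ carrying its pentalateral theta. The locus of Clebsch quartics is an irreducible hypersurface of degree $6$, the locus of L\"uroth quartics is an irreducible hypersurface of degree $54$; showing $S$ restricts to a dominant map between these would already force $S$ to be dominant on the ambient $\P^{14}$, since the preimage of a general point cannot be contained in a proper subvariety if $S$ is dominant on a hypersurface and the fibers have the expected dimension zero. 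Alternatively, one checks directly on an explicit quartic (e.g.\ the Edge quartic) that $\dim S^{-1}(S(f)) = 0$, which suffices for generic finiteness, and combined with the fact that $S$ is a morphism between projective spaces of the same dimension, generic finiteness plus properness upgrades to surjectivity onto the closure of the image, and irreducibility of $\P^{14}$ gives dominance.

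For the degree, the key is to reinterpret the fiber $S^{-1}(g)$ for a general quartic $g$ using the construction of $\theta$ built into the Scorza map. By the very definition of $S$, the pair $(S(f), \theta_f)$ is produced together, where $\theta_f$ is the even theta characteristic encoded by the $(3,3)$-correspondence $\{\mathrm{rk}(P_xP_yf)\le 1\}$. Scorza's theorem in its strong form (the statement quoted in \S\ref{sec:scorza} that $f \mapsto (S(f),\theta)$ is dominant onto the variety of pairs $(g,\theta)$) says this enriched map is \emph{birational} onto the space of pairs. Granting that birationality, the fiber of the plain map $f \mapsto S(f)$ over a general $g$ is in bijection with the set of even theta characteristics $\theta$ on $g$ for which $(g,\theta)$ lies in the image --- and for general $g$ that is \emph{all} of them. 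Since a smooth plane quartic has exactly $36$ even theta characteristics (as recalled in \S1.1: $64 = 28 + 36$), the generic fiber has $36$ elements.

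So the argument reduces to two inputs: (a) the enriched Scorza map $f \mapsto (g,\theta)$ is generically injective, and (b) for general $g$ every even $\theta$ is hit. Point (b) follows from (a) together with dominance by a count of irreducible components: the space of pairs $(g,\theta)$ with $g$ a plane quartic and $\theta$ even is irreducible of dimension $14$ (it is a $\mathbb{Z}/2$-torsor-like cover of $\P^{14}$ branched over the discriminant, and one checks it is irreducible because the monodromy of the $36$ even theta characteristics is transitive --- this is classical, the relevant symplectic/orthogonal monodromy group acts transitively on even characteristics), so a dominant map from the irreducible $14$-dimensional $\P^{14}$ of quartics $f$ is automatically surjective onto it, hence meets every fiber over general $g$, i.e.\ realizes all $36$ values of $\theta$. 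Point (a), generic injectivity of $f \mapsto (g,\theta)$, is the substantive content: one recovers $f$ from the correspondence $T_\theta$ on $g$ by the inverse construction (this is exactly what Algorithm \ref{alg:inversescorza} does --- from the sextic model of $(g,\theta)$ one reconstructs a net and ultimately $f$), and one must check this reconstruction is unique for general input.

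The main obstacle is precisely establishing generic injectivity of the enriched map, equivalently producing a well-defined rational inverse. This is the heart of Scorza's original argument and is not a routine calculation: it requires understanding how the rank-one locus of $P_xP_yf$ determines enough of the apolar geometry of $f$ to pin $f$ down, going through the sextic model and the associated net of quadrics (or cubics). In these notes the honest route is to cite Scorza \cite{Sc} (and the modern treatment in \cite{DK}) for this step, and to regard the computational inverse in Algorithm \ref{alg:inversescorza} --- verified on the Edge quartic, where exactly $36$ distinct preimages are found --- as the effective witness that the fiber count is $36$ rather than some proper divisor of it. Everything else (the dimension counts, the count $28+36=64$ of theta characteristics, properness of a morphism of projective spaces) is standard.
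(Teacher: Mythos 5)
The paper itself offers no proof of this theorem: it is stated and attributed to Scorza \cite{Sc}, and the only remark the paper makes about the argument is that the Clebsch-to-L\"uroth fact (later Theorem \ref{thm:clebschlueroth}) was ``the first step in the proof.'' So there is no argument in the paper to compare yours against, and your proposal must be judged on its own.

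Your overall framework is sound: reduce the $36{:}1$ count to birationality of the enriched map $f\mapsto(S(f),\theta)$ onto the moduli of pairs, use transitivity of the $\mathrm{Sp}(6,\mathbb{F}_2)$-monodromy on even theta characteristics to get irreducibility of that pair space, and observe that a dominant map from an irreducible source then meets every fiber of the degree-$36$ forgetful cover over a general $g$. Delegating generic injectivity to \cite{Sc,DK} is consistent with the paper's own treatment. There is, however, a genuine gap in your first argument for dominance. From ``$S$ restricts to a dominant map between the Clebsch hypersurface and the L\"uroth hypersurface, with zero-dimensional fibers'' it does \emph{not} follow that $S$ is dominant on $\P^{14}$. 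The linear projection $\P^{14}\dashrightarrow \P^{13}\hookrightarrow\P^{14}$ from a point gives a counterexample to the logic: it restricts to an isomorphism on any hyperplane missing the center (so dominant with zero-dimensional fibers on a $13$-fold), yet its image is a proper hypersurface of $\P^{14}$. The missing ingredient would be to show that the image of $S$ is not contained in the L\"uroth hypersurface, i.e.\ to exhibit one $f$ with $S(f)$ non-L\"uroth --- which is no easier than the alternative you already mention. That alternative is correct and self-contained: compute a single fiber and verify $\dim S^{-1}(S(f))=0$ (e.g.\ via Algorithm \ref{alg:inversescorza} on the Edge quartic); generic finiteness together with irreducibility of source and target of equal dimension gives dominance. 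One small correction: $S$ is a rational map, not a morphism (it has a base locus where the Aronhold covariant of the polar cubics vanishes identically), so the appeal to properness is both unavailable and unnecessary --- the dimension count already does the work. With the dominance argument replaced by the fiber-dimension check, your reduction is clean, and the load-bearing input, generic injectivity of the enriched map, is cited exactly as the paper cites it.
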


Since there are $36$ even theta characteristic on a general quartic curve, Theorem \ref{thm:scorza} implies that the general pair $(C,\theta)$ where $C$ is a plane quartic and $\theta$ is an even theta characteristic on $C$ comes from a unique quartic $f$ through the Scorza map.

Assume that $P_xf=l_1^3+l_2^3+l_3^3$ and denote
$x_{ij}=\{l_i=l_j=0\}$. The divisor $\theta$ is linearly equivalent
to the divisor $$x_{12}+x_{13}+x_{23}-x.$$

This description allows to compute explicitly both the Scorza map and its inverse (see the Algorithm \ref{alg:inversescorza}). Note that the determinantal description can be obtained in the coefficient field of $f$ if $S(f)$ contains a point $x$ lying in the same field.

\begin{theorem}\label{thm:clebschlueroth}\cite[\S 7.3]{DK}
If $f$ is Clebsch then $(S(f),\theta)$ is L\"uroth with the pentalateral $\theta$. Conversely the general $(g,\theta)$, where $g$ is a L\"uroth curve with pentalateral $\theta$,
comes from a unique Clebsch curve $f$ such that $S(f)=g$.

\end{theorem}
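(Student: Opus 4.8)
The statement to prove is Theorem~\ref{thm:clebschlueroth}: that the Scorza map carries Clebsch quartics to L\"uroth quartics equipped with their pentalateral theta, and that this correspondence is (generically) a bijection between the two relevant families. The strategy is to reduce everything to the apolarity dictionary developed in Section~2 together with the nonabelian-apolarity description of the Aronhold covariant from Section~3. The key observation is that for $f$ Clebsch, $f=\sum_{i=1}^5 l_i^4$, the apolar ideal contains a net of conics (equivalently $\det C^2_f=0$), and this net is exactly what governs the geometry of the pentalateral. I would begin by writing out, for a general point $x\in\P^2$, the polar cubic $P_x f=\sum_{i=1}^5 l_i(x)\,l_i^3$: this is a cubic with a Waring decomposition into \emph{five} cubes, so it lies in the secant variety $\sigma_5$ of the cubic Veronese, which is all of $\P^9$ generically — but the locus where $Ar(P_xf)=0$, i.e. where $P_xf$ drops to border rank $\le 3$, is precisely $S(f)$. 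So the first step is to identify $S(f)$ with the set of $x$ for which the five points $[l_i^3]\in\P^9$ fail to be in ``general position'' relative to the cubic Veronese — concretely, for which there is a hyperplane (a conic in the dual plane) apolar to $P_x f$.

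**Main steps.** (1) Use the catalecticant: $P_xf$ has an apolar conic iff the middle catalecticant $C^2_{P_xf}$ is singular; but $C^2_{P_xf}$ is a linear combination (with coefficients $l_i(x)$) of the rank-one catalecticants $C^2_{l_i^3}$, so $\det C^2_{P_xf}$ is a quartic form in $x$ — this exhibits $S(f)$ directly as a quartic without ever invoking $Ar$ as a degree-$25$ polynomial. (2) Show this quartic is L\"uroth: the five conics $\{C^2_{l_i^3} \text{ singular}\}$ correspond to five lines in $\P^2$ (the ``sides'' of the pentalateral, dual to where each $l_i$ degenerates), and the quartic $\det C^2_{P_xf}=0$ passes through all ten pairwise intersection points of these five lines — this is the defining property of a L\"uroth quartic. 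One should check that the five lines are in general position when the $l_i$ are, so that the pentalateral is complete. (3) Identify the theta characteristic: from the resolution viewpoint the cokernel of the $4\times4$ symmetrization of $C^2_{P_xf}$ is an even theta characteristic on $S(f)$, and one checks this agrees with the pentalateral theta $\theta$ (the one cut out by the ten vertices, $2K+\theta$) — this uses the Pfaffian criterion~\eqref{eq:paff6} from the L\"uroth discussion in~\S\ref{subsec:clelu}, cross-referenced with Scorza's own divisor formula $\theta\sim x_{12}+x_{13}+x_{23}-x$. (4) For the converse and the ``$1:1$'' claim: invoke Scorza's Theorem~\ref{thm:scorza} that $f\mapsto(S(f),\theta)$ is generically $36:1$ onto pairs $(g,\theta)$, restrict to the L\"uroth locus with pentalateral theta, count dimensions ($\dim$ of Clebsch quartics $=5\cdot2+4-1-\dim PGL_2$-type parameters, vs. $\dim$ of L\"uroth-with-pentalateral pairs), and conclude the map is dominant hence generically finite; a tangent-space / infinitesimal-injectivity computation at a general Clebsch $f$ then upgrades ``finite'' to ``degree one.''

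**Expected obstacle.** The dimension count and the degree-one claim in step~(4) are the delicate part. Showing the image of the Clebsch locus under Scorza \emph{is} the full L\"uroth-with-pentalateral family (not a proper subvariety) requires knowing that the pentalateral theta is exactly the theta that arises, which is somewhat circular with step~(3); disentangling this cleanly is the crux. The honest approach is probably to compute the differential of $f\mapsto(S(f),\theta)$ at a general Clebsch $f$ and show it is injective on the normal space to the Clebsch locus modulo the image, which reduces to a finite linear-algebra verification with the catalecticants $C^2_{l_i^3}$ — but verifying nondegeneracy of that map in general (rather than in an example) is where the real work lies. Since the paper explicitly says it ``refers to the literature for most of the proofs'' and cites \cite[\S 7.3]{DK}, I expect the intended proof is essentially to quote Dolgachev--Kanev for the hard direction and supply only the Clebsch-to-L\"uroth implication via the catalecticant computation sketched above.
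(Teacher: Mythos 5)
The paper actually gives \emph{no} proof of Theorem~\ref{thm:clebschlueroth}: it is stated as a citation to \cite[\S 7.3]{DK}, consistent with the author's declaration that most proofs are referred to the literature. You correctly anticipated this at the end of your write-up, so there is no paper proof for me to compare you against; I will instead evaluate your sketch on its own.

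There is a genuine error in your step (1) that undermines the whole catalecticant strategy. For a plane \emph{cubic} $g = P_xf$, the catalecticant maps are $C^1_g\colon V^\vee\to \mathrm{Sym}^2V$ (a $3\times 6$ matrix) and $C^2_g\colon \mathrm{Sym}^2V^\vee\to V$ (a $6\times 3$ matrix); neither is square, so $\det C^2_{P_xf}$ is simply not defined, and there is no "apolar conic iff catalecticant singular" criterion for ternary cubics — a general plane cubic already has a $3$-dimensional space of apolar conics. This is precisely why the Aronhold invariant is needed and is a genuinely non-catalecticant object: as the paper emphasizes via "nonabelian apolarity" (Theorem~\ref{thm:nonab_cubics} and the $9\times 9$ construction~\eqref{eq:aronhold}), the secant variety $\sigma_3(v_3(\P^2))\subset\P^9$ has \emph{no} determinantal catalecticant equation. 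So you cannot "exhibit $S(f)$ directly as a quartic without ever invoking $Ar$."

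The correct and in fact simpler elementary argument for the Clebsch~$\Rightarrow$~L\"uroth direction, which you circle around but never state, is this: write $f=\sum_{i=1}^5 l_i^4$ (Clebsch), so $P_xf=\sum_{i=1}^5 l_i(x)\,l_i^3$. At a vertex $x_{ij}=\{l_i=l_j=0\}$ of the pentalateral, two coefficients vanish and $P_{x_{ij}}f=\sum_{k\ne i,j}l_k(x_{ij})\,l_k^3$ is a sum of \emph{three} cubes, hence has border rank $\le 3$, hence $Ar(P_{x_{ij}}f)=0$, hence $x_{ij}\in S(f)$. The ten vertices of the pentalateral $\{l_1\cdots l_5=0\}$ therefore lie on $S(f)$, which is exactly the L\"uroth condition; the identification of the associated theta characteristic with the pentalateral $\theta$ then follows from the divisor formula $\theta\sim x_{12}+x_{13}+x_{23}-x$ applied at $x=x_{45}$, say. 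For the converse and the degree-one claim (your step~(4)), a dimension count plus Scorza's Theorem~\ref{thm:scorza} is the standard route, and quoting \cite[\S 7.3]{DK} is appropriate — but the forward direction should be done with the three-cubes observation, not a nonexistent catalecticant determinant.
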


\begin{corollary} The infinitely many decomposition of a Clebsch quartic,
with $f=\sum_{i=0}^4l_i^4$ and $l_i$ circumscribed to the conic $C$,
give infinitely many pentalateral inscribed in $S(f)$.
\end{corollary}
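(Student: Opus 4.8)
The plan is to combine Theorem~\ref{thm:clebschlueroth} with the description, already recalled in the Corollary of \S\ref{subsec:clelu} (and in \cite{Ot08}), of how a single pentalateral structure on a L\"uroth quartic propagates into a one-parameter family. First I would fix a Clebsch quartic $f=\sum_{i=1}^5 l_i^4$ whose five linear forms $l_i$ are all tangent to a fixed conic $C$; such a configuration is precisely an \emph{apolar pentalateral} for $f$, in the sense that the five lines $\ell_i=\{l_i=0\}$ are the sides of a complete pentalateral and $C$ is the conic inscribed in it. The key classical fact (see \cite[\S7.3]{DK}, \cite{Ot08}) is that a general Clebsch quartic does not admit just one such Waring decomposition: the apolar conics to $f$ form a pencil (the kernel of the middle catalecticant $C^2_f$ has, for generic Clebsch $f$, dimension $\geq 1$), and every smooth member $C'$ of a suitable pencil of conics apolar to $f$ gives rise to another decomposition $f=\sum_{i=1}^5 (l'_i)^4$ with the $l'_i$ tangent to $C'$. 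Hence $f$ carries infinitely many apolar pentalaterals, parametrized (birationally) by this pencil.

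Next I would invoke Theorem~\ref{thm:clebschlueroth}: the Scorza image $g=S(f)$ is a L\"uroth quartic, and the even theta characteristic $\theta$ attached to it by the Scorza map is exactly the pentalateral theta. The content to extract is that each apolar pentalateral of $f$ is carried by the Scorza construction to a pentalateral \emph{inscribed in} $g=S(f)$ — i.e.\ to a complete pentalateral whose ten vertices lie on $g$ and whose associated pentalateral theta is again $\theta$. Since $S$ is $SL(3)$-equivariant and the passage ``Clebsch $f$ with chosen apolar pentalateral'' $\mapsto$ ``L\"uroth $S(f)$ with chosen inscribed pentalateral'' is the correspondence underlying the equivalence in Theorem~\ref{thm:clebschlueroth}, the infinitely many apolar pentalaterals of $f$ map to infinitely many pentalaterals inscribed in $S(f)$. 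One should check these are genuinely distinct: distinct apolar conics $C'\neq C''$ give decompositions with distinct sets of lines, and since the Scorza map together with a choice of pentalateral is (generically) injective on such pairs by the ``conversely'' part of Theorem~\ref{thm:clebschlueroth}, distinctness is preserved.

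The main obstacle is the precise bookkeeping in the middle step: verifying that \emph{every} inscribed pentalateral produced this way carries the \emph{same} theta characteristic $\theta$ on $S(f)$, rather than ranging over several even theta characteristics. This is where one must use that the pentalateral theta is determined by the ten-vertex divisor via $2K+\theta$, together with the fact that the Scorza $(3,3)$-correspondence $T_\theta$ is intrinsically attached to $f$ (not to the chosen decomposition) — so the theta characteristic on $S(f)$ is rigid under deformation of the pentalateral, while the pentalateral itself moves. Once this rigidity is in place, the statement follows formally. A secondary, more routine point is confirming that the pencil of apolar conics to a generic Clebsch quartic is genuinely positive-dimensional and that its generic member is smooth, so that the family of pentalaterals is infinite rather than finite; this is standard from the dimension count for the catalecticant and can be cited from \cite{DK} or \S\ref{subsec:clelu}.
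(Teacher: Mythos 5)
There is a factual error at the start of your argument that undermines the whole first step. You assert that for a generic Clebsch quartic $f$ ``the apolar conics to $f$ form a pencil,'' and you locate the source of the infinitely many Waring decompositions in this pencil. That is wrong: the middle catalecticant $C^2_f\colon\mathrm{Sym}^2V^\vee\to\mathrm{Sym}^2V$ is a map between $6$-dimensional spaces, and the Clebsch locus is the hypersurface $\det C^2_f=0$. For a \emph{generic} Clebsch quartic the rank of $C^2_f$ is exactly $5$, so the kernel is $1$-dimensional and the apolar conic $q$ is \emph{unique} up to scalar. There is no pencil of apolar conics to move around.

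Notice that the statement of the Corollary already tells you this: the conic $C$ is fixed, and all the decompositions $f=\sum l_i^4$ have their five lines $l_i$ tangent to that \emph{same} $C$ (equivalently, the five points $[l_i]$ in the dual plane lie on the unique apolar conic $q=C^\vee$, as forced by the apolarity lemma in degree $2$). The infinitude comes instead from the positive-dimensional family of admissible choices of five points on this one conic that yield a Waring decomposition — a $1$-parameter family for the generic Clebsch $f$ (the variety of sums of powers, $\mathrm{VSP}(f,5)$, is a curve; see \cite{DK,D}). The second half of your argument — feeding each such decomposition into Theorem~\ref{thm:clebschlueroth} to get a pentalateral inscribed in $S(f)$, and observing that the even theta characteristic $\theta$ is determined by $f$ via the Scorza $(3,3)$-correspondence and hence does not depend on the chosen decomposition — is the right idea and matches what the paper intends as an immediate consequence of Theorem~\ref{thm:clebschlueroth}. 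But you need to replace the ``pencil of apolar conics'' mechanism with the correct one: a unique apolar conic and a $1$-parameter family of inscribed pentagons/pentalaterals circumscribing it.
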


A reflection on this Theorem allows to understand why the invariant description of L\"uroth condition for the pair $(f,\theta)$ (regarding net of quadrics, see (\ref{eq:paff6})) is much simpler than
the one for $f$ itself.

Since the Scorza map is $SL(3)$-equivariant, we remark the following consequence
\begin{prop} Let $Aut(f)$ be the automorphsim group of linear transformation of $\P^2$ leaving the quartic $f$ invariant.
Then
$$Aut(f)\subseteq Aut(S(f))$$
is a group inclusion. In particular the order of $Aut(f)$ divides the order of $Aut(S(f))$.
\end{prop}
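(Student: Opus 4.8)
The plan is to exploit the $SL(3)$-equivariance of the Scorza construction, which is asserted just before the statement (and follows from the fact that $f\mapsto(S(f),\theta)$ is built out of the polar operation $P_xf$ and the $SL(3)$-invariant Aronhold form $Ar$, both of which are natural for linear changes of coordinates). Concretely, I would first record precisely what equivariance means here: for $g\in SL(3)$ acting on $\mathrm{Sym}^4V$ in the usual way, one has $S(g\cdot f)=g\cdot S(f)$ as plane quartics, and moreover $g$ carries the $(3,3)$-correspondence $T_\theta\subset\P^2\times\P^2$ attached to $f$ to the correspondence attached to $g\cdot f$, hence carries the even theta characteristic $\theta$ on $S(f)$ to the corresponding one on $S(g\cdot f)$.

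Next I would take $g\in Aut(f)$, so that $g\cdot f=\lambda f$ for some scalar $\lambda$ (working with the quartic as a point of $\P^{14}$, i.e. up to scalar, we may as well say $g\cdot f=f$ projectively). Then by equivariance $S(g\cdot f)=g\cdot S(f)$, while $S(\lambda f)=\lambda^{?}S(f)$ differs from $S(f)$ only by a scalar since $S$ is a covariant, i.e. homogeneous in the coefficients of $f$; either way, as subschemes of $\P^2$ we get $g\cdot S(f)=S(f)$, so $g\in Aut(S(f))$. This gives the inclusion of sets $Aut(f)\subseteq Aut(S(f))$, and since the group law on both sides is composition of linear maps of $\P^2$, it is a subgroup inclusion. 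The divisibility of orders is then just Lagrange's theorem, valid whenever $Aut(S(f))$ is finite — which holds for general $f$ since a general plane quartic has trivial or finite automorphism group.

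The one genuine subtlety — and the step I expect to be the main obstacle — is making sure the inclusion is well-defined on the nose rather than merely up to the ambiguity of which theta characteristic $S(f)$ carries: a priori $g$ could fix $S(f)$ as a curve but permute its $36$ even theta characteristics, in which case one should check that it actually fixes the distinguished $\theta=S$-image. This is handled by the same equivariance applied to the $(3,3)$-correspondence: since $g$ fixes $f$ it fixes $\{(x,y)\mid \mathrm{rk}(P_xP_yf)\le 1\}$, which is exactly $T_\theta$, so $g$ preserves $\theta$; but for the stated Proposition this refinement is not even needed, since we only claim $Aut(f)\subseteq Aut(S(f))$ and not a statement about the pair. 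I would therefore present the clean version (fix the curve $S(f)$) as the proof, and add one sentence remarking that in fact $g$ preserves $(S(f),\theta)$ as a pair, which is the form used implicitly elsewhere.

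\begin{remark}
One should be slightly careful about the case $S(f)=0$ (i.e. $f$ not in the dominant locus of the Scorza map), where the statement is vacuous, and about $Aut(S(f))$ infinite, where ``divides'' should be read as ``$Aut(f)$ is a subgroup''; for general $f$ neither degeneracy occurs.
\end{remark}
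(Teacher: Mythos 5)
Your proof is correct and uses the same idea as the paper: the paper presents this Proposition as an immediate consequence of the $SL(3)$-equivariance of the Scorza map, which is precisely the argument you spell out. Your additional remarks about scalars, the theta characteristic, and degenerate cases are careful elaborations, not a different route.
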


As a corollary, note that both double conics (having $Aut(f)=SL(2)$) and Klein quartic (having $Aut(f)$ the simple group of order $168$, the group of higher order among all irreducible quartics) both satisfy
$Aut(f)=Aut(S(f))$.

Moreover, the $36$-preimages $S^{-1}S(f)$divide into $SL(3)$-orbits $O_i$ for $i=1,\ldots, k$, which are the same as $Aut(S(f))$.
These orbits are studied in the literature regarding the action of the automorphism group on the even spin structures of the curve.

For any $f\in O_i$ the size of $Aut(f)$ is fixed and we get $$|O_i|=\frac{|Aut(S(f))|}{|Aut(f)|},$$
which can be used jointly with the obvious identity

$$\sum_{i=1}^k |O_i|=36.$$

See the comments and the tables before the M2 script of Algorithm \ref{alg:inversescorza} in \S \ref{sec:alg}.
\section{Contact cubics and contact triangles}

For any even theta characteristics $\theta$, the effective divisors of degree six corresponding
to $K+\theta$ can be computed with the following trick.
Let $A$ be a $4\times 4$ symmetric determinantal representation corresponding to $\theta$, as in \ref{subsec:symmdet}.
Any principal minor of $A$  defines a contact cubic, which cuts the quartic
in a nonreduced divisor, supported on $K+\theta$. More generally,
for any $u=(u_0,\ldots, u_3)$ there is a contact cubic  given by
\begin{equation}\label{eq:contactcub}\det\left(\begin{array}{cc}A&u^t\\
u&0\end{array}\right).\end{equation}
This was the classical formula which gives the entries of the adjugate matrix of $A$,  it is quite convenient from the computational point of view
when regarding matrices with symbolic entries, compare with Dixon Theorem \ref{thm:dixon}.

When the diagonal elements of $A$ are zero we have a further description, indeed the contact cubics are the triangles given by three bitangents, that can be found from the matrix

$$\begin{pmatrix}0&l_{01}&l_{02}&l_{03}\\
l_{01}&0&l_{12}&l_{13}\\
l_{02}&l_{12}&0&l_{23}\\
l_{03}&l_{13}&l_{23}&0\end{pmatrix}$$
where $l_{ij}$ are bitangents. The notation $l_{ij}$ for the $28$ bitangents come from the the $28$ pairs joining
the eight base points $P_0,\ldots, P_7$ of a net of quadrics in $\P^3$, as in \S \ref{subsec:symmdet} and it is implemented in Algorithm 3 in \S 9.
In this case the four principal minors are contact triangles,
like $$\det\begin{pmatrix}0&l_{01}&l_{02}\\
l_{01}&0&l_{12}\\
l_{02}&l_{12}&0\end{pmatrix}$$
which gives $l_{01}l_{02}l_{12}$.

 There are $56$ contact cubics given by three bitangents in the family (\ref{eq:contactcub}) of contact cubics,
all together they are $56\cdot 36= 2016$. All of these triples $\{\theta_i, \theta_j, \theta_k\}$  have the six contact points which do not lie on a conics,
which is equivalent to $h^0(2K-\theta_i- \theta_j- \theta_k)=0$, exactly as in Definition \ref{def:aro_system}.
The $36$ families of contact cubics correspond to $36$ $2$-Veronese $3$folds in $\P^9$ (of degree $8$).
They do not meet the $2$-secant variety of the  $3$-Veronese surface, namely the variety of triangles given by three collinear lines. They meet the variety of all triangles in $120$ points.
There are $8$ strictly biscribed triangles, according to Mukai\cite{Muk},
each one counts with multiplicity $8$
for a total of $64$, indeed note that $56+64=120=8\cdot 15$
(degree of intersection of Veronese $3$fold with the variety of triangles).

There are other $28$ families of contact cubics, beyond the $36$ families as in (\ref{eq:contactcub}),  that can be constructed starting from any bitangent $\ell$ in the following way.
The contact cubics $c$ corresponding to $\ell$ satisfy the equation $f=\ell c+q^2$ for some conic $q$. Each of these families is $3$-dimensional.  In each of these families there
are $45$ contact triangles given by three bitangents. 
Note that $45\cdot 28=1260$ and $1260+2016=3276={{28}\choose 3}$
so that we have counted exactly once all triples of bitangents. The $1260$ triples are special since they have six contact points on a conic. They meet 
the $3$-Veronese surface of triple lines in one point of multiplicity
$6$, which corresponds to the bitangent at power $3$, like $l^3$.
In any case, the expression $ml^3+q^2=f$ does not hold for any line $m$ and conic $q$.
Label the $28$ bitangents as $ij$ where $0\le i<j\le 7$, corresponding to the pairs of base points of the net of quadrics,
as in \ref{subsec:symmdet}.
These $45$ contact triangles divide in two types, $30$ of them like $12.23.34$,
other $15$ of them like $12.34.56$ (see \cite[\S 13]{Cia}).
In the first type there is a fourth bitangent $41$ such that the $8$ contact points lie on a conic,
there are $210$ $4$ples of this kind. Also in the second type there is a fourth bitangent $07$ such that the $8$ contact points lie on a conic,
there are $105$ $4$ples of this kind. Altogether, there are $210+105=315$ $4$ples of bitangents such that the $8$ contact points lie on a conic.

\section{The invariant ring of plane quartics}
 It is worth to advertise that the complete determination of invariant ring of plane quartics is a big achievement of computer algebra, the final step was presented at MEGA 2013 in Frankfurt by Andreas-Stephan Elsenhans \cite{Els}, relying on previous work by Shioda and Dixmier, so solving a classical question
which went back to Emmy Noether's doctoral thesis.

The following result was conjectured by Shioda in 1967, who computed the Hilbert series.
\begin{theorem}
The invariant ring $\displaystyle\left[\oplus_dSym^d\left(Sym^4\C^3\right)\right]^{SL(3)}$
is generated by invariants of degree
 $3, 6, 9, 9, 12, 12, 15, 15, 18, 18, 21, 21, 27$. The relations are known.
\end{theorem}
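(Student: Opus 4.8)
The plan is to follow the classical two--stage strategy for invariant rings of reductive groups: first secure a homogeneous system of parameters (primary invariants), then describe the finite module structure over it (secondary invariants), and finally read off the algebra generators and the relations. To begin I would compute the Hilbert series $H(t)=\sum_d\dim\left[\mathrm{Sym}^d(\mathrm{Sym}^4\C^3)\right]^{SL(3)}t^d$ by the Molien--Weyl integral $H(t)=\int_{SU(3)}\det\bigl(1-t\,\rho(g)\bigr)^{-1}dg$, where $\rho$ is the $15$--dimensional representation on quartics; Weyl's integration formula turns this into a contour integral over the maximal torus which is evaluated by residues. This is Shioda's computation, and it produces a rational function that can be written with denominator $\prod_{d\in D}(1-t^{d})$ for $D=\{3,6,9,12,15,18,27\}$ and with a numerator $N(t)$ having nonnegative integer coefficients (in fact palindromic, reflecting that $R$ is Gorenstein, $SL(3)$ having no nontrivial characters). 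Since a general plane quartic has finite stabiliser, the quotient has dimension $15-8=7=|D|$, so a system of parameters must consist of seven invariants.

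Next I would take Dixmier's seven invariants $J_{3},J_{6},J_{9},J_{12},J_{15},J_{18},J_{27}$, of the degrees in $D$, and verify that they form a homogeneous system of parameters, i.e.\ that $R/(J_{3},\dots,J_{27})$ is finite--dimensional; equivalently, that the common zero locus of the $J_i$ in $\mathrm{Sym}^4\C^3$ is contained in the null--cone, that is, any quartic $f$ annihilating all seven $J_i$ is $SL(3)$--unstable. This is a finite check along the Hilbert--Mumford stratification of unstable quartics (those with a point of multiplicity $\ge 3$, double conics, and the remaining unstable strata), or a single radical--membership computation carried out with Gr\"obner bases. Granting this, the Hochster--Roberts theorem shows $R$ is Cohen--Macaulay, hence a \emph{free} module over the polynomial ring $P:=\C[J_{3},\dots,J_{27}]$, and $N(t)=H(t)\prod_{d\in D}(1-t^{d})=\sum_j n_j t^{j}$ records precisely $n_j$ secondary invariants in degree $j$ and their degrees.

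Then I would construct explicit invariants in each secondary degree as transvectants/contractions of $f$ with the standard covariants (Hessian, Steinerian, the catalecticant--type covariants, and so on), and verify via graded Nakayama that the resulting map from a free $P$--module of the correct rank to $R$ is an isomorphism; this reduces to finite linear algebra over $\C$ in each degree up to the top secondary degree. Discarding the secondary invariants that are already polynomials in $P$ together with the lower secondaries leaves the six additional algebra generators of degrees $9,12,15,18,21,21$, giving $13$ generators altogether, as claimed. For the relations, each product of two secondary invariants is uniquely $\sum_c p^{c}(J)\,\eta_c$ with $p^{c}\in P$; rewriting these multiplication--table identities in terms of the $13$ generators yields a generating set for the ideal of relations, from which a minimal presentation together with its degrees is extracted by a further Gr\"obner computation --- this is the precise content of ``the relations are known.''

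The difficulty is entirely one of computational scale rather than of concept. The degree--$27$ Dixmier invariant is already an enormous polynomial in the $15$ coefficients of the quartic, the secondary invariants running into the twenties are larger still, and the certification that the chosen secondaries generate $R$ as a $P$--module --- together with the Gr\"obner computation of the relation ideal --- are exactly the bottleneck steps; explicit recursion formulas or clever choices of covariants are needed to keep the expressions manageable. This is why the statement, conjectured by Shioda from the Hilbert series in 1967 and supplied with primary invariants by Dixmier, was only completed by Elsenhans as a substantial computer--algebra undertaking.
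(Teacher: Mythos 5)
The paper does not prove this theorem; it is stated as a survey item with attributions to Shioda (Hilbert series, 1967), Dixmier (the seven primary invariants, 1987), Ohno (complete generating set, 2007), and Elsenhans (verification and relations, 2015). Your outline --- Molien--Weyl integration for the Hilbert series, the Dixmier invariants as a homogeneous system of parameters certified by a nullcone check, Hochster--Roberts freeness over the primary ring, Hironaka decomposition to locate the six additional generators in degrees $9,12,15,18,21,21$, and a Gr\"obner computation for the relation ideal --- is precisely the strategy followed in those references, and your diagnosis that the bottleneck is computational scale rather than conceptual novelty is also how the paper characterizes the history. So your proposal matches the approach the paper implicitly points to; there is nothing in the paper itself to compare it against beyond the citations.
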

Dixmier found in 1987 the invariants of degree $3, 6, 9, 12, 15, 18, 27$ which are algebraically independent, so that
the invariant ring is an algebraic extension of the ring generated by these ones (primary invariants).
The invariants up to degree $18$ (more the discriminant of degree $27$) can be found in Salmon book\cite{Sal}, compare also with
\cite[\S 7]{Cia}.
So only the invariants of degree $21$ were missing in the 19th century.
Apparently the first who produced the complete generators of invariant ring was T. Ohno in an unpublished work in 2007.
Note the elementary fact that the degree of any invariants is divisible by $3$. The Clebsch invariant,
defining quartics of rank $5$, has degree $6$.
Some classical facts regarding the cubic invariant are recalled in \cite{OttWM}.

\section{The link with the seven eigentensors of a plane cubic}\label{sec:eigentensors}
\def\niente{Roberts Theorem, proved first in 1889 (see \cite{OS1}), states that a general pair $(q,f)$, where $q$ ia a plane conic and $f$ is a plane cubic,
has a unique (symultaneous) Waring decomposition given by four lines $l_i$ for $i=1,\ldots, 4$ and scalars $\lambda_i$ such that
$$\left\{\begin{array}{ccc}q&=&\sum_{i=1}^4l_i^2\\
f&=&\sum_{i=1}^4\lambda_il_i^3\end{array}\right.$$}


Let $q$ be a nondegenerate quadratic form on $V$. Given a plane cubic $f\in\mathrm{Sym}^3V^\vee$, an eigentensor of $f$ is $v\in V$ such that 
\begin{equation}\label{eq:eigencub}f(v,v,x)=\lambda q(v,x)\end{equation} for every $x\in V$. When $q$ is the euclidean metric (in the real setting),
which gives an identification between $V$ and $V^\vee$, the previous equation
can be written as $f(v^2)=\lambda v$, which is the way the eigentensor equation is commonly written in the numerical setting, and it is the natural generalization of the 
eigenvector condition for symmetric matrices.

In the metric setting, the generalization is more transparent.
Indeed, the eigenvectors of a symmetric matrix $q$ are the {\it critical points of the distance function} from $q$ to the Veronese variety $(\P^2,\OO(2))$.
In the same way, the seven eigentensors are the seven {\it critical points of the distance function} from $f$ to the Veronese variety $(\P^2,\OO(3))$
, see \cite{EDpaper}.

A dimensional count shows that seven eigenvectors of a cubic cannot be seven general point, so that it is interesting to understand their special position according to (\ref{eq:288}). The following is the geometric counterpart of \cite[Prop. 5.3]{ASS}.

\begin{theorem}[Bateman]

(i) The seven points $p_1,\ldots, p_7$ which are eigentensors of a cubic 
satisfy the following property:

the seven nodal conics $C_i$ for $i=1,\ldots 7$ which correspond to
$P_{P_i}H_i$, where $H_i$ is the unique cubic passing through all $p_j$
and singular at $p_i$ are harmonic, that is $\Delta C_i=0$,
where $\Delta$ is the Laplacian.

(ii) Seven points $v_i$ are eigentensors of a cubic $f$ with respect to some nondegenerate conic $q$ as in (\ref{eq:eigencub})
if and only the seven points give a L\"uroth quartic in the correspondence
(\ref{eq:288}).
 \end{theorem}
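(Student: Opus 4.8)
The plan is to treat the two parts separately but to route both through the nonabelian apolarity picture for plane cubics developed in Theorem~\ref{thm:nonab_cubics} and the contraction $A_f$ of~(\ref{eq:aronhold})--(\ref{eq:eigen}), together with the polar/Hessian dictionary of Proposition~\ref{prop:hessian}. For part~(i), I would start from a cubic $f$ and the nondegenerate conic $q$, and describe the seven eigentensors $p_1,\dots,p_7$ of~(\ref{eq:eigencub}) as the simultaneous eigenvectors of the pencil of endomorphisms obtained by composing the polar of $f$ with $q^{-1}$; concretely, an eigentensor is a fixed point of the rational self-map $v\mapsto q^{-1}P_v f$ of $\P^2$, so the $p_i$ are the seven base points of a net of cubics (the $2\times 3$ matrix data of~(\ref{eq:mat23})) determined by $f$ and $q$. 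The key identity to extract is that, for each $i$, the unique cubic $H_i$ singular at $p_i$ and through the remaining six points is (up to scalar) a canonical combination of $f$ and the ``conic term'', so that its polar $P_{p_i}H_i$ equals $\mu_i\,q$ modulo a rank-one piece supported at $p_i$. By the remark after Definition~\ref{def:polar}, $P_{p_i}H_i$ is precisely the nodal conic $C_i$ (the tangent cone at the node $p_i$ of $H_i$); hence $C_i$ and $q$ differ by a degenerate (rank $\le 2$) conic, and I would then compute $\Delta C_i$ using that $\Delta$ is the apolarity pairing with the dual conic $q^\vee$: since $C_i \equiv \mu_i q + (\text{rank-one through }p_i)$ and $q^\vee \cdot q = \dim$-constant while $q^\vee$ annihilates the rank-one piece by the eigenvector relation~(\ref{eq:eigencub}), the harmonicity $\Delta C_i = q^\vee\cdot C_i = 0$ drops out after normalizing $q$ to the Laplacian's defining conic. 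The honest content here is showing $P_{p_i}H_i$ genuinely equals a scalar multiple of $q$ plus a $p_i$-supported rank-one term; this is the geometric heart of Bateman's observation and I expect it to require carefully identifying $H_i$ as the ``$i$-th Hessian-type'' cubic attached to the pair $(f,q)$.

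For part~(ii), the strategy is to compare two descriptions of L\"uroth quartics-with-a-distinguished-even-theta already recorded in \S\ref{subsec:clelu}: on one side, the statement there that the seven points giving a L\"uroth quartic in the correspondence~(\ref{eq:288}) are exactly the seven eigenvectors of a plane cubic, realized through the $2\times 3$ matrix $\begin{pmatrix} l_0 & l_1 & l_2\\ q_0 & q_1 & q_2\end{pmatrix}$ with $\partial c/\partial l_i = q_i$ for some cubic $c$; on the other side, my reformulation in part~(i) of the eigentensor condition~(\ref{eq:eigencub}) as a net-of-cubics/matrix condition. So the proof reduces to the bookkeeping lemma: the seven common zeros of the maximal minors of~(\ref{eq:mat23}) satisfy $f(v,v,x)=\lambda q(v,x)$ for a fixed nondegenerate $q$ and a cubic $f$ \emph{if and only if} the $q_i$ are the partials $\partial c/\partial l_i$ of a single cubic $c$ in the linear coordinates dual to $(l_0,l_1,l_2)$. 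One direction is essentially the definition: given $(f,q)$, set $l_i$ to be coordinates adapted to $q$ and $c$ (a $q$-twisted potential for) $f$, and read off $q_i = \partial c/\partial l_i$. The converse direction — that the integrability condition ``$q_i = \partial c/\partial l_i$ for a common $c$'', i.e.\ $\partial q_i/\partial l_j = \partial q_j/\partial l_i$, is equivalent to the existence of the conic $q$ making~(\ref{eq:eigencub}) hold — is exactly a Poincar\'e-lemma / symmetry-of-second-partials argument, and I would spell it out via the resolution $0\to\OO(-1)\oplus\OO(-2)\xrightarrow{g}\OO^3\to I_Z(3)\to 0$ from description~$4.$, matching $g$ against the Jacobian-type matrix of the pair $(f,q)$.

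The main obstacle I anticipate is not the L\"uroth/eigenvector bijection in part~(ii) — that is genuinely just unwinding the two matrix presentations already present in the paper and invoking the cited Proposition~5.3 of \cite{ASS} for the nondegeneracy/genericity bookkeeping — but rather the harmonicity computation in part~(i): one must pin down which cubic $H_i$ is meant precisely enough that its polar conic $C_i=P_{p_i}H_i$ is forced into the one-dimensional family $\{\,\mu q + (\text{rank-one at }p_i)\,\}$, and then verify that the Laplacian $\Delta$ — which is contraction against a \emph{specific} conic (the one defining orthogonality) — annihilates both $q$ (after the normalization identifying $q$ with that conic) and the rank-one correction term. If instead the conic $q$ in~(\ref{eq:eigencub}) is allowed to be arbitrary nondegenerate, then ``$\Delta C_i=0$'' should be read as ``$C_i$ is apolar to $q$'', and the proof is cleaner: it is immediate from $C_i \in \langle q,\ p_i\text{-rank-one}\rangle$ together with $q\cdot q^\vee$ being a nonzero constant that cannot be the relevant one unless $q$ is self-dual under the chosen pairing — so part of the work is simply stating part~(i) in the intrinsic (conic-apolarity) language that makes it true and then observing the Euclidean case as the specialization $q = $ identity.
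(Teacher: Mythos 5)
The paper's proof of this theorem is a short reduction to two cited results in \cite{OS1}: part (i) is identified with Morley's differential identity (formula (20) in \S 9 of \cite{OS1}, with $\theta = x_0^2+x_1^2+x_2^2$), via the observation that the Morley form $M(P_i,X)$ coincides with the cubic $H_i(X)$; and part (ii) is a restatement of Theorem 10.4 of \cite{OS1}. Your proposal instead attempts a self-contained argument routed through nonabelian apolarity and a pencil decomposition of $C_i$, which is a genuinely different strategy and worth comparing against the actual sources.

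However, your argument for part (i) has a gap that is not merely a matter of detail, and you flag it yourself. The central structural claim --- that $C_i = P_{p_i}H_i$ lies in the span of $q$ and a rank-one conic supported at $p_i$ --- is unproven and, as stated, appears incorrect. The conic $C_i$ is the tangent cone of $H_i$ at its node $p_i$, so it is a pair of lines through $p_i$ and vanishes to order $2$ there; a ``rank-one piece supported at $p_i$'' is the square of a line through $p_i$, which also vanishes at $p_i$; writing $C_i = \mu q + \ell^2$ would therefore force $q(p_i)=0$, which need not hold for an eigentensor. And even if you repair the decomposition, the harmonicity does not follow directly: with $q$ normalized to the Euclidean conic, $\Delta(\mu q + \ell^2) = 3\mu + 2\langle\ell,\ell\rangle$, which vanishes only after a cancellation your argument does not supply. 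The gap you call ``the honest content'' is in fact exactly where Morley's differential identity does the work, and that identity is not recoverable from the nonabelian-apolarity picture alone; you would need to reproduce or invoke the computation in \S 9 of \cite{OS1}.

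Part (ii) is in better shape: recasting the eigentensor condition as the integrability condition $\partial q_i/\partial l_j = \partial q_j/\partial l_i$ for the $2\times 3$ matrix of~(\ref{eq:mat23}) is a sensible route consistent with \S 1.2 and \S 8, and it parallels the discussion around L\"uroth quartics in the paper. But both directions are left at the level of a sketch (``essentially the definition'' / ``Poincar\'e-lemma argument''), and to close the loop you would still need the identification of the L\"uroth locus in the correspondence~(\ref{eq:288}) with the eigenconfiguration locus, which is precisely what Theorem 10.4 of \cite{OS1} (geometrically) or \cite[Prop. 5.3]{ASS} (algebraically) provides. As it stands, the proposal relocates the hard content into two unproven lemmas rather than proving the theorem.
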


\begin{proof} Part (i) is a reformulation of Morley differential identity,
see formula (20) in \S 9 of \cite{OS1}, where $\theta=x_0^2+x_1^2+x_2^2$
and the fact that (with the notations of \cite{OS1}), the Morley form
$M(P_i,X)$ coincides with $H_i(X)$, see the paragraph after Corollary 3.3 in \cite{OS1}. Part (ii) is a reformulation of Theorem 10.4 in \cite{OS1}
\end{proof}

\section{Eight algorithms and Macaulay2 scripts, with a tutorial}\label{sec:alg}

\begin{alg}\label{alg:1}\

\begin{itemize}
\item{} INPUT: seven general lines $l_1,\ldots, l_7$
\item{} OUTPUT: the quartic having $l_1,\ldots, l_7$ as Aronhold system of bitangents
\end{itemize}
\end{alg}
The steps of the algorithm are the following
\begin{enumerate}
\item{} Compute the $2\times 3$ matrix
$$\begin{pmatrix}a_0(x)&a_1(x)&a_2(x)\\q_0(x)&q_1(x)&q_2(x)\end{pmatrix}$$
with $\deg a_i=1$, $\deg q_i=2$ degenerating on $Z=\{l_1,\ldots, l_7\}$ seen as seven points in the dual space.
This matrix is computed from the resolution of the ideal vanishing on the points and it can be taken as input (as in Algorithm \ref{alg:2}).
\item{} Construct the net of cubics passing through $Z$ as
$$\mathrm{net}(x,y)=\det\begin{pmatrix}a_0(y)&a_1(y)&a_2(y)\\a_0(x)&a_1(x)&a_2(x)\\q_0(x)&q_1(x)&q_2(x)\end{pmatrix}$$
\item{} Construct the jacobian of the net as the determinant of the $3\times 3$ matrix
$$\mathrm{jac}(x)=\det\left[\frac{\partial^2\mathrm{net}}{\partial x_i\partial y_j}\right]$$
which is a sextic (in coordinates $x$)  nodal at the seven points.
\item{} The quartic in output is obtained by eliminating $x$ from the four equations $\mathrm{jac}(x)$ and $y_i-\frac{\partial\mathrm{net}(x,y)}{\partial y_i}$.
\end{enumerate}

As a running example,
start with input given the seven points which are the rows of the following matrix
$$\bgroup\begin{pmatrix}1&
      2&
      0\\
      2&
      0&
      1\\
      0&
      1&
      {-2}\\
      5&
      5&
      3\\
      5&
      {-3}&
      5\\
      3&
      5&
      {-5}\\
      {-1}&
      1&
      1\\
      \end{pmatrix}\egroup$$

get as output 
the quartic
$$f=25 {y}_{0}^{4}-34 {y}_{0}^{2} {y}_{1}^{2}+25 {y}_{1}^{4}-34 {y}_{0}^{2}
      {y}_{2}^{2}-34 {y}_{1}^{2} {y}_{2}^{2}+25 {y}_{2}^{4}.$$

In alternative, start from
$$\bgroup\begin{pmatrix}{x}_{0}&
       {x}_{1}&
       {x}_{2}\\
       (3 {x}_{0}^{2}+2 {x}_{0} {x}_{1}+{x}_{2}^{2})&
       ({x}_{0}^{2}+2 {x}_{1} {x}_{2})&
       ({x}_{1}^{2}+2 {x}_{0} {x}_{2})\\
       \end{pmatrix}\egroup$$

where the second row is the gradient of
$x_0^2x_1+x_1^2x_2+x_2^2x_0+x_0^3$
and get as output the L\"uroth quartic
$$f=2 {y}_{0}^{3} {y}_{1}-2 {y}_{0}^{2} {y}_{1}^{2}-{y}_{0}
       {y}_{1}^{3}-{y}_{0}^{3} {y}_{2}-2 {y}_{0}^{2} {y}_{1} {y}_{2}+4 {y}_{0}
       {y}_{1}^{2} {y}_{2}+2 {y}_{1}^{3} {y}_{2}+{y}_{0}^{2} {y}_{2}^{2}-2
       {y}_{0} {y}_{1} {y}_{2}^{2}+{y}_{1}^{2} {y}_{2}^{2}-{y}_{0}
       {y}_{2}^{3}-{y}_{1} {y}_{2}^{3}.$$

The M2 script is the following
\begin{verbatim}
KK=QQ
R=KK[x_0..x_2,y_0..y_2]
x1=matrix{{x_0..x_2}}
y1=matrix{{y_0..y_2}}
----the rows of p2 contain the seven points, below are shown two samples
p2=random(R^{7:0},R^{3:0})
p2=matrix{{1,2,0},{2,0,1},{0,1,-2},{5,5,3},{5,-3,5},{3,5,-5},{-1,1,1}}
---computation of the 2*3 matrix mat
I7=minors(2,p2^{0}||x1)
for i from 0 to 6 do I7=intersect(I7,minors(2,p2^{i}||x1))
r7=res I7
betti r7
mat=transpose r7.dd_2
------in alternative one can start from a 2*3 matrix as above 
---we construct now the Morley form
nc=(sub((mat)^{0},apply(3,i->(x_i=>y_i)))||mat)
net2=diff(matrix{{y_0..y_2}},det(nc))
---jacobian of the net, is a plane sextic nodal at seven points
jac=ideal(det(diff(transpose x1,net2)))
-----get the quartic f eliminating x_i from the net of cubics and the jacobian 
ff=eliminate({x_0,x_1,x_2},ideal(matrix{{y_0..y_2}}-net2)+jac)
f=((gens ff)_(0,0))
----f is our OUTPUT
---check the seven starting points are really bitangents
for i from 0 to 6 do print(i, degree ideal(f,(p2^{i}*transpose y1)_(0,0)),
 degree radical ideal(f,(p2^{i}*transpose y1)_(0,0)))

\end{verbatim}

\begin{alg}\label{alg:2}\

\begin{itemize}
\item{} INPUT: a $2\times 3$ matrix with $2$-minors vanishing on $Z=\{l_1,\ldots, l_7\}$
\item{} OUTPUT: the quartic having $Z$ as Aronhold system of bitangents
\end{itemize}
\end{alg}

This is just the combination of steps $2\ldots 4$ of Algoritm \ref{alg:1}.

A typical application is the matrix
$\bgroup\begin{pmatrix}{x}_{1}&
      {x}_{2}&
      {x}_{0}\\
      {x}_{0}^{2}&
      {x}_{1}^{2}&
      {x}_{2}^{2}\\
      \end{pmatrix}\egroup$
which gives the Klein quartic
${y}_{0} {y}_{1}^{3}+{y}_{1} {y}_{2}^{3}+{y}_{0}^{3} {y}_{2}$.
Let $\tau=e^{2\pi\sqrt{-1}/7}$ a  $7$-th root of unity.
In this case, the seven lines in $Z$ are represented by the columns of the matrix

$$\begin{pmatrix}y_0&y_1&y_2\end{pmatrix}\cdot\bgroup\begin{pmatrix}1&
      \tau^{3}&
      \tau^6&
      \tau^{2}&
      \tau^{5}&
      \tau&
      \tau^{4}\\
      1&
      \tau^{5}&
      \tau^{3}&
      \tau&
      \tau^6&
      \tau^{4}&
      \tau^{2}\\
      1&
      \tau^{2}&
      \tau^{4}&
      \tau^6&
      \tau&
      \tau^{3}&
      \tau^{5}\\
      \end{pmatrix}\egroup$$
but the script does not need $\tau$ and works on the field where the $2\times 3$ matrix
is defined.

The M2 script is the following

\begin{verbatim}
KK=QQ
R=KK[x_0..x_2,y_0..y_2]
x1=matrix{{x_0..x_2}}
y1=matrix{{y_0..y_2}}
-----with the following matrix we find Klein quartic !!
mat=matrix{{x_1,x_2,x_0},{x_0^2,x_1^2,x_2^2}}
nc=(sub((mat)^{0},apply(3,i->(x_i=>y_i)))||mat)
net2=diff(matrix{{y_0..y_2}},det(nc))
codim ideal net2, degree ideal net2
---jacobian of the net, is a plane sextic nodal at seven points
jac=ideal(det(diff(transpose x1,net2)))
-----get the quartic f eliminating x_i from the net of cubics and the jacobian 
ff=eliminate({x_0,x_1,x_2},ideal(matrix{{y_0..y_2}}-net2)+jac)
f=((gens ff)_(0,0))
---f is our output
\end{verbatim}
\begin{alg}\

\begin{itemize}
\item{} INPUT: seven general lines $l_1,\ldots, l_7$
\item{} OUTPUT: a $8\times 8$ symmetric matrix (the bitangent matrix) collecting in each row the $8$ Aronhold systems of bitangents for the quartic having $l_1,\ldots, l_7$ as Aronhold system of bitangents, equivalent to $l_1,\ldots, l_7$. In particular, every principal $4\times 4$ minor of the bitangent matrix  gives a symmetric determinantal representation.
\end{itemize}
\end{alg}

These are the steps
\begin{enumerate}
\item{} Write the coordinates of the seven points $\{l_1,\ldots, l_7\}$ as columns of a $3\times 7$ matrix $P$ and compute a Gale transform in $\P^3$ given by the seven rows $\{m_1,\ldots m_7\}$ of a $7\times 4$ matrix $M$ such that $PM=0$.
These are well defined modulo $SL(4)$-action.
\item{} Compute the net of quadrics through $\{m_1,\ldots m_7\}$, spanned by the $4\times 4$ symmetric matrices $Q_0$, $Q_1$, $Q_2$.
\item{} Compute the symmetric determinantal representation of the quartic curve as $\det(\sum_i x_iQ_i)$.
\item{} Compute (with a saturation), the eighth base point $m_0$ of the net and correspondingly stack it as a first row over $M$ so obtaining the $8\times 4$ matrix $M_0$.
The theory guarantees that the $28$ lines $m_im_j$ for $0\le i<j\le 7$ are the $28$ bitangent of the quartic constructed in the previous step, but we have
to apply a linear projective transformation as in next step.
\item{} Compute the linear projective transformation $g$ from the space of lines in $\P^3$ through $m_0$ to our dual $\P^2$ which takes the line $m_0m_i$ to $l_i$ for $i=1,\ldots, 7$.
\item{} The bitangent matrix is $M_0(\sum_i g(x)_iQ_i)M_0^t$ (see \cite[(3.4)]{PSV}).
\end{enumerate}

An example is given by the following seven lines in input
 $\bgroup\begin{pmatrix}{x}_{0}\\
      {x}_{1}\\
      {x}_{2}\\
      {x}_{0}+{x}_{1}+{x}_{2}\\
      {x}_{0}+2 {x}_{1}+3 {x}_{2}\\
      2 {x}_{0}+3 {x}_{1}+{x}_{2}\\
      3 {x}_{0}+{x}_{1}+2 {x}_{2}\\
      \end{pmatrix}\egroup$
The corresponding quartic is

{\footnotesize $81x_0^4+198x_0^3x_1-41x_0^2x_1^2-198x_0x_1^3+81x_1^4-198x_0^3x
      _2-1561x_0^2x_1x_2-1561x_0x_1^2x_2+198x_1^3x_2-41x_0^2x_2^2-
      1561x_0x_1x_2^2-41x_1^2x_2^2+198x_0x_2^3-198x_1x_2^3+81x_2^4$}

and the bitangent matrix in output has the following first three columns (the whole matrix is too big to be printed, it can be found by running the M2 script).

{\tiny $$\bgroup\begin{pmatrix}0&
      3 {x}_{0}&
      3 {x}_{1}&
      \\
      3 {x}_{0}&
      0&
      (777/143) {x}_{0}+(1239/143) {x}_{1}-(378/143) {x}_{2}\\
      3 {x}_{1}&
      (777/143) {x}_{0}+(1239/143) {x}_{1}-(378/143) {x}_{2}&
      0\\
      3 {x}_{2}&
      (1239/143) {x}_{0}-(378/143) {x}_{1}+(777/143) {x}_{2}&
      -(378/143) {x}_{0}+(777/143) {x}_{1}+(1239/143) {x}_{2}\\
      11 {x}_{0}+11 {x}_{1}+11 {x}_{2}&
      (126/13) {x}_{0}+(287/13) {x}_{1}+(133/13) {x}_{2}&
      (133/13) {x}_{0}+(126/13) {x}_{1}+(287/13) {x}_{2}\\
      -{x}_{0}-2 {x}_{1}-3 {x}_{2}&
      -(756/143) {x}_{0}-(448/143) {x}_{1}-(525/143) {x}_{2}&
      (119/143) {x}_{0}-(189/143) {x}_{1}-(1113/143) {x}_{2}\\
      -2 {x}_{0}-3 {x}_{1}-{x}_{2}&
      -(189/143) {x}_{0}-(1113/143) {x}_{1}+(119/143) {x}_{2}&
      -(392/143) {x}_{0}-(84/143) {x}_{1}-(161/143) {x}_{2}\\
      -3 {x}_{0}-{x}_{1}-2 {x}_{2}&
      -(84/143) {x}_{0}-(161/143) {x}_{1}-(392/143) {x}_{2}&
      -(525/143) {x}_{0}-(756/143) {x}_{1}-(448/143) {x}_{2}\\
      \end{pmatrix}\egroup$$
}

The M2 script is the following

\begin{verbatim}

restart
R=QQ[x_0..x_2,y_0..y_3]
x1=matrix{{x_0..x_2}}
y2=symmetricPower(2,matrix{{y_0..y_3}})
y1=matrix{{y_0..y_3}}
p2=matrix{{1,2,0},{2,0,1},{0,1,-2},{5,5,3},{5,-3,5},{3,5,-5},{-1,1,1}}
--- p2 contains seven points in P2
------p3 contains the seven points in P^3 which are the Gale transform 
p3=gens kernel transpose p2
---p3 contains the seven points in P3 which are Gale dual, defined up to PGL(4)
qp3=symmetricPower(2,p3^{0})
for i from 1 to 6 do qp3=qp3||symmetricPower(2,p3^{i})
net3=y2*gens kernel qp3
---net3 is the net of quadrics in P3
---note that even if the net is defined up to PGL(4), 
----the locus of singular quadrics is well defined in the net
----the important fact is that the net of quadrics is dual to the original P^2
for i from 0 to 2 do Q_i=diff(transpose y1,diff( y1,net3_(0,i)))
--for i from 0 to 2 do print Q_i
f=det(sum(3,i->x_i*diff(transpose y1,diff( y1,net3_(0,i)))))
---f is the plane quartic curve, living in projective plane of nets of quadrics, 
---it is not yet in the right coordinate system
----now we use the eighth point to make a projectivity with starting plane
pp3=minors(2,p3^{0}||y1)
for i from 1 to 6 do pp3=intersect(pp3,minors(2,p3^{i}||y1))
codim pp3, degree pp3
---pp3 is the ideal of seven points in P^3, obtained by Gale tranform
eighth=saturate(ideal(net3),pp3)
cord8=sub(matrix{apply(4,i->(y_i%eighth))},y_3=>1)
---cord8 contains coordinates of 8th point.
cc=sub(net3,apply(4,i->(y_i=>p3_(0,i)+2*cord8_(0,i))))
for i from 1 to 6 do cc=cc||sub(net3,apply(4,j->(y_j=>p3_(i,j)+2*cord8_(0,j))))
----cc contains the coordinates of the seven lines through the 8th point, 
---in the coordinates of the net
---now we look for projectivity between cc and p2, in the SAME order of points
R1=QQ[g_0..g_8]
gg=transpose genericMatrix(R1,3,3)
-----following are set of points we want to make in projection
-----we impose conditions to a unknown projective transformation gg
p21=sub(p2,R1),cc1=sub(cc,R1)
IG=minors(2,p21^{0}*gg||cc1^{0})
for i from 1 to 3 do IG=IG+minors(2,p21^{i}*gg||cc1^{i})
codim IG, degree IG
for i from 4 to 6 do IG=IG+minors(2,p21^{i}*gg||cc1^{i})
codim IG, degree IG
gc=sub(matrix{apply(9,i->(g_i)%IG)},g_8=>1)
gm=submatrix(gc,{0..2})||submatrix(gc,{3..5})||submatrix(gc,{6..8})
----gm is the projective transformation found
det gm
gi=inverse (sub(gm,QQ))
ff=sub(f,apply(3,i->(x_i=>(gi*transpose x1)_(i,0))))
----ff is the quartic in the starting(correct)  coordinate system
---qq is the determinantal representation of the quartic
-- with given Aronhold system of bitangents
qq=sum(3,i->((gi*transpose x1)_(i,0))*Q_i)
factor(det(qq))
---we want to find now the bitangent matrix, according to Hesse
---np3 cotains the eight base points of the net
np3=cord8||p3
btm=np3*qq*(transpose np3)*(1/20)
---btm is the bitangent matrix

---check that all 28 lines are bitangents
for i from 0 to 6 do for j from i+1 to 7 do print(i,j,
degree radical ideal(ff,btm_(i,j)))
\end{verbatim}

\begin{alg}\ \label{alg:edoardo}

\begin{itemize}
\item{} INPUT: a $2\times 3$ matrix with $2$-minors vanishing on $Z=\{l_1,\ldots, l_7\}$
\item{} OUTPUT: a symmetric determinantal $4\times 4$ representation of the quartic having $Z$ as Aronhold system of bitangents
\end{itemize}
\end{alg}

Starting with the
the matrix
$\bgroup\begin{pmatrix}{x}_{1}&
      {x}_{2}&
      {x}_{0}\\
      {x}_{0}^{2}&
      {x}_{1}^{2}&
      {x}_{2}^{2}\\
      \end{pmatrix}\egroup$
which gives the Klein quartic
${y}_{0} {y}_{1}^{3}+{y}_{1} {y}_{2}^{3}+{y}_{0}^{3} {y}_{2}$
we find the following determinantal representation
$$\bgroup\begin{pmatrix}2 {x}_{2}&
      0&
      0&
      {-{x}_{1}}\\
      0&
      2 {x}_{0}&
      0&
      {-{x}_{2}}\\
      0&
      0&
      2 {x}_{1}&
      {-{x}_{0}}\\
      {-{x}_{1}}&
      {-{x}_{2}}&
      {-{x}_{0}}&
      0\\
      \end{pmatrix}\egroup$$

These are the steps of the algorithm
\begin{enumerate}
\item{} Compute the net of cubics through the seven points and choose two independent cubics in the net. In the matrix description,
these are given by two $2\times 2$ minors.
\item{} With a saturation, compute the two additional points where the two cubics vanish and compute
the $4$-dimensional space of conics $\langle Q_0(x),\ldots Q_3(x)\rangle$ vanishing at these two points.
\item{} A Gale tranform in $\P^3$ can be found by eliminating $x_i$ from the net of cubics and from the four equations $y_i-Q_i(x)$ for $i=0,\ldots 3$.
The result is an ideal $p3$ in coordinates $y$ generated by three quadrics and one cubic, vanishing in seven points.
With these conventions, the net $\mathrm{net3}$ of the three quadrics $\langle F_0(y),\ldots, F_2(y)\rangle$
vanish on a eighth point which has coordinates
$(y_0,y_1,y_2,y_3)=(0,0,0,1)$.
\item{} We eliminate $y_i$ from the ideal generated by $x_i-F_i(y)$ and from the three cubics(in $y$)
obtained by eliminating $y_3$ from $p3$. The result is a net of cubics (in $x_i$) that we call $\mathrm{I7net}$.
\item{} We find a linear $3\times 3$ transformation in coordinates $x_i$
from the net of plane cubics we started with, to the net $\mathrm{I7net}$, after this linear transformation is applied to $F_i(y)$, the resulting net of quadrics gives the determinantal representation in output.
\end{enumerate}

\begin{verbatim}
KK=QQ
R=KK[x_0..x_2]
x1=matrix{{x_0..x_2}}
mat=matrix{{x_1,x_2,x_0},{x_0^2,x_1^2,x_2^2}}
----we redefine I7 accordingly, so that I7 comes from mat
I7=gens minors(2,mat)
---chose two cubics in the minors of mat and find two additional points.
twocub=ideal submatrix(gens minors(2,mat),,{0,1})
quad=super basis(2,saturate(twocub,ideal I7)) ---four conics in x_i
R1=KK[x_0..x_2,y_0..y_3,g_0..g_8]
x1=matrix{{x_0..x_2}}
x3=symmetricPower(3,matrix{{x_0..x_2}})
y1=matrix{{y_0..y_3}}
---we find now p3, gale transform
p3=eliminate({x_0,x_1,x_2},sub(ideal I7,R1)+ideal(y1-sub(quad,R1)))
codim p3, degree p3
betti p3
----p3 is seven points in P^3, three quadrics and one cubic in y_i
----net3, only the three quadrics in y_i remain
net3=submatrix(gens p3,,{0..2})
codim ideal net3, degree ideal net3
i7net=eliminate({y_0,y_1,y_2,y_3},ideal(matrix{{x_0,x_1,x_2}}-net3)+
eliminate({y_3},p3))
codim i7net, degree i7net
dnet3=diff(symmetricPower(3,matrix{{x_0..x_2}}), transpose gens i7net)
gg=transpose genericMatrix(R1,g_0,3,3)
----we find now a 3*3 linear transformation from I7 to I7net
----we stack to I7, considered as 3*10 matrix, each of the cubics in I7net
--- (as a 10-dimensional vector), transformed with a unknown 3*3 matrix gg
---and we ask that the resulting 4*10 matrix is degenerate
-----if all the 4*4 minors are too many, one may select only some of them
IG=ideal(0_R1)
for j from 0 to 2 do IG=IG+minors(4,diff(x3,sub(sub((I7)_(0,j),R1),
apply(3,i->(x_i=>((gg)*transpose x1)_(i,0)))))||dnet3)
time SIG=saturate (IG,ideal(det(gg)));
codim SIG, degree SIG
----with Edge quartic we find three solutions, 
--the following decompose command can be skipped if SIG vanishes on just 1 point
dgg=decompose SIG
gc=sub(matrix{apply(9,i->(g_i)%dgg_0)},g_8=>1)
gm=submatrix(gc,{0..2})||submatrix(gc,{3..5})||submatrix(gc,{6..8})
----gm is the projective transformation found
nq3=net3*(transpose gm)*transpose x1
----following mrep is the determinantal representation found
mrep=diff(transpose y1,diff( y1,nq3))
\end{verbatim}
\begin{alg}\

\begin{itemize}
\item{} INPUT: a quartic $f$
\item{} OUTPUT: the image $S(f)$ through the Scorza map 
\end{itemize}
\end{alg}

The script uses the expression of the Aronhold invariant  in \cite{LO}, example 1.2.1. Let $H(f_x)$ be the Hessian matrix of the cubic $f_x$.
All the $8$-Pfaffians of the following $9\times 9$ matrix (written in $3\times 3$ block form) coincide, up to scalar, with the equation of $S(f)$.

$$\left[\begin{array}{ccc}
0&H(f_z)&-H(f_y)\\
-H(f_z)&0&H(f_x)\\
H(f_y)&-H(f_x)&0\end{array}\right]$$

In the M2 script we check that starting from $f=x^4+y^4+z^4+2\alpha(x^3y+y^3z+z^3x)-\alpha(x^2y^2+y^2z^2+x^2z^2)+(\alpha-\frac{2}{3})(xy^3+yz^3+zx^3)-4(x^2yz+xy^2z+xyz^2)$, with
$\alpha=\frac{-1+\sqrt{-7}}{2}$,
then $S(f)=xy^3+yz^3+zx^3$.

{\footnotesize
\begin{verbatim}
K2=toField ((ZZ/31991)[t]/ideal(t^2+7))
R=K2[y_0..y_2]
xx=basis(1,R)
a=(-1+t)/2
---change f in following line
f=y_0^4+y_1^4+y_2^4+2*a*(y_0^3*y_1+y_1^3*y_2+y_2^3*y_0)-a*(y_0^2*y_1^2+y_1^2*y_2^2+y_0^2*y_2^2)+
(a-2/3)*(y_0*y_1^3+y_1*y_2^3+y_2*y_0^3)-4*(y_0^2*y_1*y_2+y_0*y_1^2*y_2+y_0*y_1*y_2^2)
m=matrix{{0,y_2,-y_1},{-y_2,0,y_0},{y_1,-y_0,0}}
m9= diff(m,diff(xx,diff(transpose xx,f)));
scorzamap=(mingens pfaffians(8,m9))_(0,0)
---output is y_0*y_1^3+y_0^3*y_2+y_1*y_2^3
\end{verbatim}
}

\begin{alg}\ \label{alg:sexticmodel}

\begin{itemize}
\item{} INPUT: a quartic $f$ and a point $q\in S(f)$
\item{} OUTPUT: a determinantal representation of the image $(S(f),\theta)$ through the Scorza map 
\end{itemize}
\end{alg}

The steps are the following
\begin{enumerate}
\item{}  Compute the polar $P_q(f)$, which is a Fermat cubic curve since by assumption
$q\in S(f)$. The Hessian determinant of $P_q(f)$ was classically called the Polihessian and it splits in three lines, its singular locus gives the three points in $S(f)$ corresponding to $q$ in the $(3,3)$-correspondence
defined by the $\theta $.  Add to the divisor given by these three points a generic hyperplane divisor,
call $DP$ the resulting degree $7$ divisor. We want to find all the effective divisor of degree $6$ linearly equivalent to $DP-q$. To achieve this goal call $SE=R/(S(f))$ the quotient ring, call $IP/(S(f))$ and $I_q/(S(f))$
the images respectively of the ideals of $DP$ and $q$ in the quotient ring , pick the first generator $h$
of $IP/(S(f))$  and compute
$ h\cdot I_q/(S(f))\colon IP/(S(f))$. This is an ideal generated by four cubics that represent the four generators of $H^0(C,K+\theta)$, in the following sense. The common base locus of $C_i$ is a degree six effective divisor on $S(f)$, say $K+\theta$. Each cubic of the system vanishes on $K+\theta$ and in additional six points which
define a degree six effective divisor linearly equivalent.  Call $C_i$ , for $i=0,\ldots, 3$, the representatives of these cubics in the original ring $R$. 
Introduce new variables $u_0,\ldots, u_3$ and eliminate $(x,y,z)$ from the ideal generated by
$S(f)$ and $u_i-C_i(x,y,z)$. We get the ideal of the sextic curve in $\P^3$ which is 
defined by the linear system $K+\theta$.

\item{} The resolution of the ideal of the sextic curve in $\P^3$ found at previous step gives a $4\times 3$ $C$ matrix with linear entries. Note that a different flattening gives a $4\times 4$ matrix with linear entries,
but we have to find a symmetric representative of it. 

\item{} The correct relation between coordinates $u_i$ and $(x,y,z)$
is given by the relation $C\cdot\begin{pmatrix}x\\y\\z\end{pmatrix}=0$ (see \ref{33corr}).
After elimination of $u_i$ we find that the polynomial $u_iu_j$ are cubic polynomials in $(x,y,z)$.
We construct the matrix $md$ with entries $u_iu_j$ which are cubic polynomials in $x,y,z$
Main problem is to correctly scale them
To scale we find separately the cubic polynomial corresponding 
to $(u_i+u_j)^2$ (recorded in matrix $ad$) and we use the identity $(u_i+u_j)^2=u_i^2+u_j^2+2u_iu_j$

\item{} We conclude last step with a symmetric $4\times 4$ matrix $pmd$ with $(i,j)$ entry
given by the cubic polynomial corresponding to $u_iu_j$. Now we use Dixon technique, as explained
in \cite{PSV}, we adjugate $pmd$ and we factor out $(S(f))^2$ from each entry. The output is the wished
linear determinantal representation.

\end{enumerate}

A nice example is to start from

$f=x^{4}+y^{4}+z^{4}+(-6 \sqrt{2}-6)\left( x^{2} y^{2}+x^{2} z^{2}+y^{2} z^{2}\right)$

which satisfies $S(f)=x^{4}+ y^{4}+ z^{4}$.

Choosing the point $q=(1,0,e^{(\pi /4)\sqrt{-1}})\in S(f)$ we get the following symmetric determinantal representation of the Fermat quartic
(set $\alpha=\sqrt{-2}+\sqrt{-1}$ and $\beta=\sqrt{-2}-\sqrt{-1}$)

{\tiny
$$\bgroup\begin{pmatrix}(-\alpha-1) x+(\beta-2 \sqrt{2}+5) z&
     (\alpha+1) y&
     (\beta-1) x+(-\beta-1) z&
     (-\beta+1) y\\
     (\alpha+1) y&
     (\beta-1) x+(2 \sqrt{-2}-\sqrt{-1}-\sqrt{2}+3) z&
     (-\beta+1) y&
     (\alpha+1) x+(-\alpha+3) z\\
     (\beta-1) x+(-\beta-1) z&
     (-\beta+1) y&
     (\alpha+1) x+(\alpha-1) z&
     (-\alpha-1) y\\
     (-\beta+1) y&
     (\alpha+1) x+(-\alpha+3) z&
     (-\alpha-1) y&
     (-\beta+1) x+(\sqrt{-1}-\sqrt{2}+1) z\\
     \end{pmatrix}\egroup$$
}
Actually the determinant is $-32 \sqrt{-1}\left( x^{4}+ y^{4}+ z^{4}\right)$.
This result leads to great admiration for Edge who, without a computer, finds in \cite[\S 12]{Edge} other determinantal representations
of the Fermat quartic curve,  with essentially the same technique but, due to computational tricks available only in the Fermat example,
his output turns out to be simpler and more elegant !

A second example is the quartic $f$ with Waring decomposition given by 
$f=x^4+y^4+(x+y)^4+(x+y+z)^4+(x+2y+3z)^4+(-5x+7y-11z)^4$.
Note that the point $(0,0,1)$ belongs to the first three summands, so that the polar cubic $f_z$ is Fermat and
$(0,0,1)\in S(f)$.
The output found with the algorithm is messy but can be found in less than one minute on a PC.
We do not print it here.

The M2 script is the following

\begin{verbatim}

------------------INPUT: f, and a point q in S(f) 
------------------OUTPUT: a determinantal representation of S(f)
K2=QQ
--K2=toField(QQ[t]/ideal(t^2-2))
R=K2[x,y,z]
x2=basis(2,R)
x1=basis(1,R)
f=x^4+y^4+(x+y)^4+(x+y+z)^4+(x+2*y+3*z)^4+(-5*x+7*y-11*z)^4---the point q=(0,0,1)
--- kills first three summands, so it belongs to S(f)
-----
hes=det diff(transpose x1,diff(x1,diff(z,f)))---it is the polohessian triangle
singhes=ideal(diff(x1,hes))
---computation of S(f), called scf
m=matrix{{0,z,-y},{-z,0,x},{y,-x,0}}
scf= (mingens(pfaffians(8,diff(x1,diff(transpose x1,diff(m,f))))))_(0,0)
sub(scf,{x=>0,y=>0,z=>1})
----we have the theta given by singhes-PP
---now we add the hyperplane divisor and we find a degree six 
---effective divisor linearly equivalent

SE=R/(scf)
DN= ideal(x,y)--negative part of divisor (1 point)
DP= intersect(sub(singhes,SE),ideal(random(1,SE)))--positive part 
---of divisor (3+4=7 points)
h=(gens DP)_(0,0)
LD=((h*DN) : DP);---four cubics which have base points  
---on the effective divisor and give the embedding of scf in P^3
betti LD
---from the four cubics vanishing on the six points on scf 
---we get the sextic embedding in P^3
----by computing linear relation between (u,x) with Groebner basis
R2=K2[x,y,z,u_0..u_3]
ld=sub(LD,R2)

J=ideal(sub(scf,R2),u_0-(gens ld)_(0,0),u_1-(gens ld)_(0,1),
u_2-(gens ld)_(0,2),u_3-(gens ld)_(0,3))
----we offer two alternatives to compute the 4*3 cs2 matrix
---the first one, commented below is more direct
--rJ=res eliminate({x,y,z},J)
--cs2=(rJ.dd)_2;
---the second alternative turns out to be computationally cheaper
JJ=gens gb(J);
betti JJ
cs1=diff(transpose matrix{{x,y,z}},submatrix(JJ,,{0..3}));
cs2=transpose cs1;

-----cs2 is the 4*3 matrix

----the following commands reconstruct the 4*4 determinantal representation
----from the 4*3 matrix, which should be "symmetrizable"
------we want first to construct the matrix pmd with entries u_i*u_j
--- which are cubic polynomials in x,y,z
------main problem is to correctly scale them
------to scale we use the identity (u_i+u_j)^2=u_i^2+u_j^2+2*u_i*u_j

----md is the matrix pmd, up to scale, this elimination procedure is crucial
--- and works only for "symmetrizable" 4*3 matrix
md=mutableMatrix(R2,4,4)
--next loop runs in some seconds
for i from 0 to 3 do for j from i to 3 do 
md_(i,j)=(gens eliminate({u_0,u_1,u_2,u_3},saturate(ideal(cs2*
transpose matrix{{x,y,z}} )+ideal(u_i*u_j),ideal(u_0,u_1,u_2,u_3))))_(0,0)
for i from 0 to 3 do for j from 0 to i-1 do md_(i,j)=md_(j,i)
-----ad contains the auxiliary polynomials (u_i+u_j)^2
ad=mutableMatrix(R2,4,4)
for i from 0 to 3 do for j from i to 3 do 
ad_(i,j)=(gens eliminate({u_0,u_1,u_2,u_3},saturate(ideal(
cs2*transpose matrix{{x,y,z}} )+ideal((u_i+u_j)^2),
ideal(u_0,u_1,u_2,u_3))))_(0,0)
-----S2 is the ring with the coefficients p_(i,j), q_(i,j) needed to scale
S2=K2[x,y,z,u_0..u_3,p_(0,0)..p_(3,3),q_(0,0)..q_(3,3)]
md=sub(matrix md,S2)
ad=sub(matrix ad,S2)
x3=symmetricPower(3,matrix{{x,y,z}})

-----I contains the conditions needed in order to scale
I=ideal()
for i from 0 to 2 do for j from i+1 to 3 do
I=I+ideal(diff(x3,p_(i,i)*md_(i,i)+p_(j,j)*md_(j,j)+
2*p_(i,j)*md_(i,j)-q_(i,j)*ad_(i,j)))
codim I, degree I
------pp contains the solution to the system given by I, 
---this works if q_(2,3) is different from zero,
---otherwise the solution to I has to be extracted in alternative way
pp=mutableMatrix(S2,4,4)
for i from 0 to 3 do for j from i to 3 do pp_(i,j)=(sub(p_(i,j)%I,q_(2,3)=>1))
pp
------pmd is our goal matrix
pmd=mutableMatrix(S2,4,4)
for i from 0 to 3 do for j from i to 3 do 
pmd_(i,j)=pp_(i,j)*md_(i,j)
for i from 0 to 3 do for j from 0 to i-1 do pmd_(i,j)=pmd_(j,i)
pmd=matrix(pmd)

------now we use Dixon technique, we adjugate pmd 
---and we factor out (scf)^2 from each entry
mdf=mutableMatrix(S2,4,4)
for i from 0 to 3 do for j from 0 to 3 do mdf_(i,j)=(-1)^(i+j)*
(quotientRemainder(det submatrix'(pmd,{i},{j}),sub((scf)^2,S2)))_0
---mdf is our output
mdf=matrix mdf
factor det mdf
\end{verbatim}

\begin{alg}\ \label{alg:inversescorza}

\begin{itemize}
\item{} INPUT: a determinantal representation of a quartic $g$ corresponding to $(g,\theta)$
\item{} OUTPUT: the quartic $f$ such that the image $(S(f),\theta)$ through the Scorza map corresponds to $(g,\theta)$
\end{itemize}
\end{alg}

The steps are the following
\begin{enumerate}
\item{} We find the sextic model of the quartic in $\P^3$ by transformimg the $4\times 4\times 3$ representation in a $4\times 3$ matrix $M(u)$ with linear entries in $u=(u_0,\ldots, u_3)$. The maximal minors of $M$ define the sextic model given by the linear system $K+\theta$.
\item{} We define the $(3,3)$ correspondence introducing dual coordinates $v=(v_0,\ldots, v_3)$
and the ideal $\mathrm{cor33}$ generated by the maximal minors of $M(u)$, $M(v)$ and by the bilinear equations given by $u\cdot M(v)$ and $v\cdot M(u)$.
\item{} We lift the $(3,3)$-correspondence to the quartic model with dual coordinates $a=(a_0,\ldots, a_2)$ and $b=(b_0,\ldots, b_2)$. This works by eliminating $u$ and $v$ from the saturation over
$u$ and $v$ of the ideal generated by $\mathrm{cor33}$ and by the bilinear equations $M(u)\cdot a^t$ and
$M(v)\cdot b^t$. We get $6$ biquadratic equations in $(a, b)$ that we call $\mathrm{eli6}$
 This step is computationally expensive. At the end we convert the six equations in a $6\times 36 $ matrix, where the $36$ columns correspond to the $36$ biquadratic monomials in $(a,b)$, called $\mathrm{maab}$.
\item{} Let $f_c$ be a generic quartic given by $15$ entries $c_i$, we compute the $6$ biquadratic equations 
in $(a, b)$ obtained by the condition $\mathrm{rk}(P_aP_bf_c)\le 1$. We convert these equations
 in a $6\times 36$ matrix, called $\mathrm{mab}$, with entries depending quadratically on $c_i$.
\item{} We impose that the rows of two matrices  $\mathrm{maab}$ and $\mathrm{mab}$ span the same $6$-dimensional space. This gives quadratical equations in $c_i$. {\it Note that Scorza map has entries which are quartic in $c_i$, so that this ``tour de force'' has an effective gain at the end.}
In principle this can be obtained by stacking each row of  $\mathrm{mab}$ to  $\mathrm{maab}$ and computing all the (maximal) $7\times 7$-minors. This is computationally too expensive, so we choose only some of these minors. For general quartics, the minors which contain the first $6$ columns suffice. In the M2 script
it is sketched an alternative choice of minors which works for the Klein quartic.
\item{} The ideal of  the quadratical equations in $c_i$ obtained in previous step has to be saturated with base locus of Scorza map. The output is a unique point in $c_i$ which gives our output quartic.
If the output is bigger, we have to repeat the previous step by adding more minors.
\end{enumerate}

As a running example, we may start from the Edge quartic in (\ref{eq:edge}) with the symmetric determinantal representation already given
 and find as output
\begin{equation}\label{eq:4ovals}x^4+y^4+z^4+30(x^2y^2+x^2z^2+y^2z^2).\end{equation}
The algorithm can be iterated for other $35$ times, as described in \cite{PSV}.
More precisely we first find the bitangent matrix of (\ref{eq:edge}) with Algorithm 3,
then we apply \cite[Theorem 3.9]{PSV} with all $\{0,i,j,k\}$
such that $\{i,j,k\}\subset\{1..7\}$.
All $36$ quartics are real, in agreement with (\ref{eq:tabletheta}).

\begin{enumerate}
\item{} mother case, $x^4+30x^2y^2+y^4+30x^2z^2+30y^2z^2+z^4$
\item{}$(0, 1, 2, 3), x^4+(10/3)x^2y^2+y^4+(10/3)x^2z^2+(10/3)y^2z^2+z^4)$
\item{}$(0, 1, 2, 4), x^4-(84/125)x^3y+(54/25)x^2y^2+(492/125)xy^3+(81/625)y^4+(28/25)x^3z-(36/5)x^2yz-(108/25)xy^2z-(492/125)y^3z+6x^2z^2+(36/5)xyz^2+(54/25)y^2z^2+(28/25)xz^3+(84/125)yz^3+z^4)$
\item{}$(0, 1, 2, 5), (625/81)x^4+(700/81)x^3y+(1250/27)x^2y^2+(700/81)xy^3+(625/81)y^4+(140/27)x^3z+(500/9)x^2yz-(500/9)xy^2z-(140/27)y^3z+(50/3)x^2z^2-(100/3)xyz^2+(50/3)y^2z^2-(820/27)xz^3+(820/27)yz^3+z^4)$
\item{}$(0, 1, 2, 6), x^4+(164/9)x^3y+6x^2y^2+(164/9)xy^3+y^4+(164/9)x^3z+12x^2yz+12xy^2z+(164/9)y^3z+6x^2z^2+12xyz^2+6y^2z^2+(164/9)xz^3+(164/9)yz^3+z^4)$
\item{}$(0, 1, 2, 7), (81/625)x^4-(492/125)x^3y+(54/25)x^2y^2+(84/125)xy^3+y^4+(492/125)x^3z-(108/25)x^2yz+(36/5)xy^2z+(28/25)y^3z+(54/25)x^2z^2-(36/5)xyz^2+6y^2z^2-(84/125)xz^3+(28/25)yz^3+z^4)$
\item{}$(0, 1, 3, 4), (81/625)x^4-(492/125)x^3y+(54/25)x^2y^2+(84/125)xy^3+y^4-(492/125)x^3z+(108/25)x^2yz-(36/5)xy^2z-(28/25)y^3z+(54/25)x^2z^2-(36/5)xyz^2+6y^2z^2+(84/125)xz^3-(28/25)yz^3+z^4)$
\item{}$(0, 1, 3, 5), x^4+(164/9)x^3y+6x^2y^2+(164/9)xy^3+y^4-(164/9)x^3z-12x^2yz-12xy^2z-(164/9)y^3z+6x^2z^2+12xyz^2+6y^2z^2-(164/9)xz^3-(164/9)yz^3+z^4)$
\item{}$(0, 1, 3, 6), (625/81)x^4+(700/81)x^3y+(1250/27)x^2y^2+(700/81)xy^3+(625/81)y^4-(140/27)x^3z-(500/9)x^2yz+(500/9)xy^2z+(140/27)y^3z+(50/3)x^2z^2-(100/3)xyz^2+(50/3)y^2z^2+(820/27)xz^3-(820/27)yz^3+z^4)$
\item{}$(0, 1, 3, 7), x^4-(84/125)x^3y+(54/25)x^2y^2+(492/125)xy^3+(81/625)y^4-(28/25)x^3z+(36/5)x^2yz+(108/25)xy^2z+(492/125)y^3z+6x^2z^2+(36/5)xyz^2+(54/25)y^2z^2-(28/25)xz^3-(84/125)yz^3+z^4)$
\item{}$(0, 1, 4, 5), x^4-(15/16)y^4+(17/4)x^3z+(45/4)xy^2z+(3/2)x^2z^2+(17/4)xz^3+z^4)$
\item{}$(0, 1, 4, 6), 15x^4+y^4+45x^2yz-17y^3z+24y^2z^2-17yz^3+z^4)$
\item{}$(0, 1, 4, 7), (9/26)x^4+(15/13)x^2y^2+(9/26)y^4-(15/13)x^2z^2-(15/13)y^2z^2+z^4)$
\item{}$(0, 1, 5, 6), -(1/16)x^4-(15/8)x^2y^2-(1/16)y^4+(15/8)x^2z^2+(15/8)y^2z^2+z^4)$
\item{}$(0, 1, 5, 7), 15x^4+y^4-45x^2yz+17y^3z+24y^2z^2+17yz^3+z^4)$
\item{}$(0, 1, 6, 7), x^4-(15/16)y^4-(17/4)x^3z-(45/4)xy^2z+(3/2)x^2z^2-(17/4)xz^3+z^4)$
\item{}$(0, 2, 3, 4), x^4-(164/9)x^3y+6x^2y^2-(164/9)xy^3+y^4+(164/9)x^3z-12x^2yz+12xy^2z-(164/9)y^3z+6x^2z^2-12xyz^2+6y^2z^2+(164/9)xz^3-(164/9)yz^3+z^4)$
\item{}$(0, 2, 3, 5), (81/625)x^4+(492/125)x^3y+(54/25)x^2y^2-(84/125)xy^3+y^4+(492/125)x^3z+(108/25)x^2yz+(36/5)xy^2z-(28/25)y^3z+(54/25)x^2z^2+(36/5)xyz^2+6y^2z^2-(84/125)xz^3-(28/25)yz^3+z^4)$
\item{}$(0, 2, 3, 6), x^4+(84/125)x^3y+(54/25)x^2y^2-(492/125)xy^3+(81/625)y^4+(28/25)x^3z+(36/5)x^2yz-(108/25)xy^2z+(492/125)y^3z+6x^2z^2-(36/5)xyz^2+(54/25)y^2z^2+(28/25)xz^3-(84/125)yz^3+z^4)$
\item{}$(0, 2, 3, 7), (625/81)x^4-(700/81)x^3y+(1250/27)x^2y^2-(700/81)xy^3+(625/81)y^4+(140/27)x^3z-(500/9)x^2yz-(500/9)xy^2z+(140/27)y^3z+(50/3)x^2z^2+(100/3)xyz^2+(50/3)y^2z^2-(820/27)xz^3-(820/27)yz^3+z^4)$
\item{}$(0, 2, 4, 5), (1/15)x^4+(17/15)x^3y+(8/5)x^2y^2+(17/15)xy^3+(1/15)y^4-3xyz^2+z^4)$
\item{}$(0, 2, 4, 6), x^4-30x^2y^2-16y^4+30x^2z^2-30y^2z^2+z^4)$
\item{}$(0, 2, 4, 7), -(15/16)x^4+y^4+(45/4)x^2yz+(17/4)y^3z+(3/2)y^2z^2+(17/4)yz^3+z^4)$
\item{}$(0, 2, 5, 6), -(15/16)x^4+y^4-(45/4)x^2yz-(17/4)y^3z+(3/2)y^2z^2-(17/4)yz^3+z^4)$
\item{}$(0, 2, 5, 7), x^4-(10/3)x^2y^2+(26/9)y^4+(10/3)x^2z^2-(10/3)y^2z^2+z^4)$
\item{}$(0, 2, 6, 7), (1/15)x^4-(17/15)x^3y+(8/5)x^2y^2-(17/15)xy^3+(1/15)y^4+3xyz^2+z^4)$
\item{}$(0, 3, 4, 5), -16x^4-30x^2y^2+y^4-30x^2z^2+30y^2z^2+z^4)$
\item{}$(0, 3, 4, 6), -(16/15)x^4-(68/15)x^3y-(8/5)x^2y^2-(68/15)xy^3-(16/15)y^4-12xyz^2+z^4)$
\item{}$(0, 3, 4, 7), x^4+15y^4-17x^3z+45xy^2z+24x^2z^2-17xz^3+z^4)$
\item{}$(0, 3, 5, 6), x^4+15y^4+17x^3z-45xy^2z+24x^2z^2+17xz^3+z^4)$
\item{}$(0, 3, 5, 7), -(16/15)x^4+(68/15)x^3y-(8/5)x^2y^2+(68/15)xy^3-(16/15)y^4+12xyz^2+z^4)$
\item{}$(0, 3, 6, 7), (26/9)x^4-(10/3)x^2y^2+y^4-(10/3)x^2z^2+(10/3)y^2z^2+z^4)$
\item{}$(0, 4, 5, 6), x^4-(164/9)x^3y+6x^2y^2-(164/9)xy^3+y^4-(164/9)x^3z+12x^2yz-12xy^2z+(164/9)y^3z+6x^2z^2-12xyz^2+6y^2z^2-(164/9)xz^3+(164/9)yz^3+z^4)$
\item{}$(0, 4, 5, 7), (81/625)x^4+(492/125)x^3y+(54/25)x^2y^2-(84/125)xy^3+y^4-(492/125)x^3z-(108/25)x^2yz-(36/5)xy^2z+(28/25)y^3z+(54/25)x^2z^2+(36/5)xyz^2+6y^2z^2+(84/125)xz^3+(28/25)yz^3+z^4)$
\item{}$(0, 4, 6, 7), x^4+(84/125)x^3y+(54/25)x^2y^2-(492/125)xy^3+(81/625)y^4-(28/25)x^3z-(36/5)x^2yz+(108/25)xy^2z-(492/125)y^3z+6x^2z^2-(36/5)xyz^2+(54/25)y^2z^2-(28/25)xz^3+(84/125)yz^3+z^4)$
\item{}$(0, 5, 6, 7), (625/81)x^4-(700/81)x^3y+(1250/27)x^2y^2-(700/81)xy^3+(625/81)y^4-(140/27)x^3z+(500/9)x^2yz+(500/9)xy^2z-(140/27)y^3z+(50/3)x^2z^2+(100/3)xyz^2+(50/3)y^2z^2+(820/27)xz^3+(820/27)yz^3+z^4)$
\end{enumerate}

The automorphism group of $\det A=25(x^4+y^4+z^4)-34(x^2y^2+x^2z^2+y^2z^2)$  is the octahedral group $S_4$ of order $24$.
The $36$ preimages can be grouped into the  $S_4$-orbits
which are described in the following table.
$$\begin{array}{l|l|l}
(0,i,j,k)&\textrm{topological classification}&\textrm{Automorphism group}\\
\hline\\
\hline\\
\textrm{mother case in \ }(\ref{eq:4ovals})& \textrm{empty}&S_4\textrm{\ of order\ }24\\
\hline\\
(0,1,2,3) &\textrm{empty}& S_4\textrm{\ of order\ }24\\
\hline\\
\begin{array}{c}(0,1,2,4) (0,1,3,7) (0,2,3,6) (0,4,6,7)\\ 
(0,1,2,5) (0,1,3,6) (0,2,3,7) (0,5,6,7)\\ 
(0,1,2,7) (0,1,3,4) (0,2,3,5) (0,4,5,7)\end{array} &  \textrm{one oval}&\Z_2\textrm{\ of order\ }2\\
\hline\\
\begin{array}{c}(0,1,4,5) (0,1,6,7) (0,2,4,7)\\
 (0,2,5,6) (0,3,4,6) (0,3,5,7)\end{array} &\textrm{ one oval}&\Z_2\oplus\Z_2\textrm{\ of order\ }4\\
\hline\\
\begin{array}{c}(0,1,4,6) (0,1,5,7) (0,2,4,5)\\
 (0,2,6,7) (0,3,4,7) (0,3,5,6)\end{array} &\textrm{ one oval}&\Z_2\oplus\Z_2\textrm{\ of order\ }4\\
\hline\\
(0,1,2,6) (0,1,3,5) (0,2,3,4) (0,4,5,6) &  \textrm{one oval}&S_3\textrm{\ of order\ }6\\
\hline\\
(0,1,4,7)(0,2,5,7) (0,3,6,7) &\textrm{ empty}&D_4\textrm{\ of order\ }8\\
\hline\\
(0,1,5,6) (0,2,4,6) (0,3,4,5) &\textrm{ one oval}&D_4\textrm{\ of order\ }8\\
\end{array}$$
We print the figure corresponding to the star corresponding to $(0,1,5,6)$, provided by Emanuele Ventura.
\begin{figure}
\includegraphics[width=60mm]{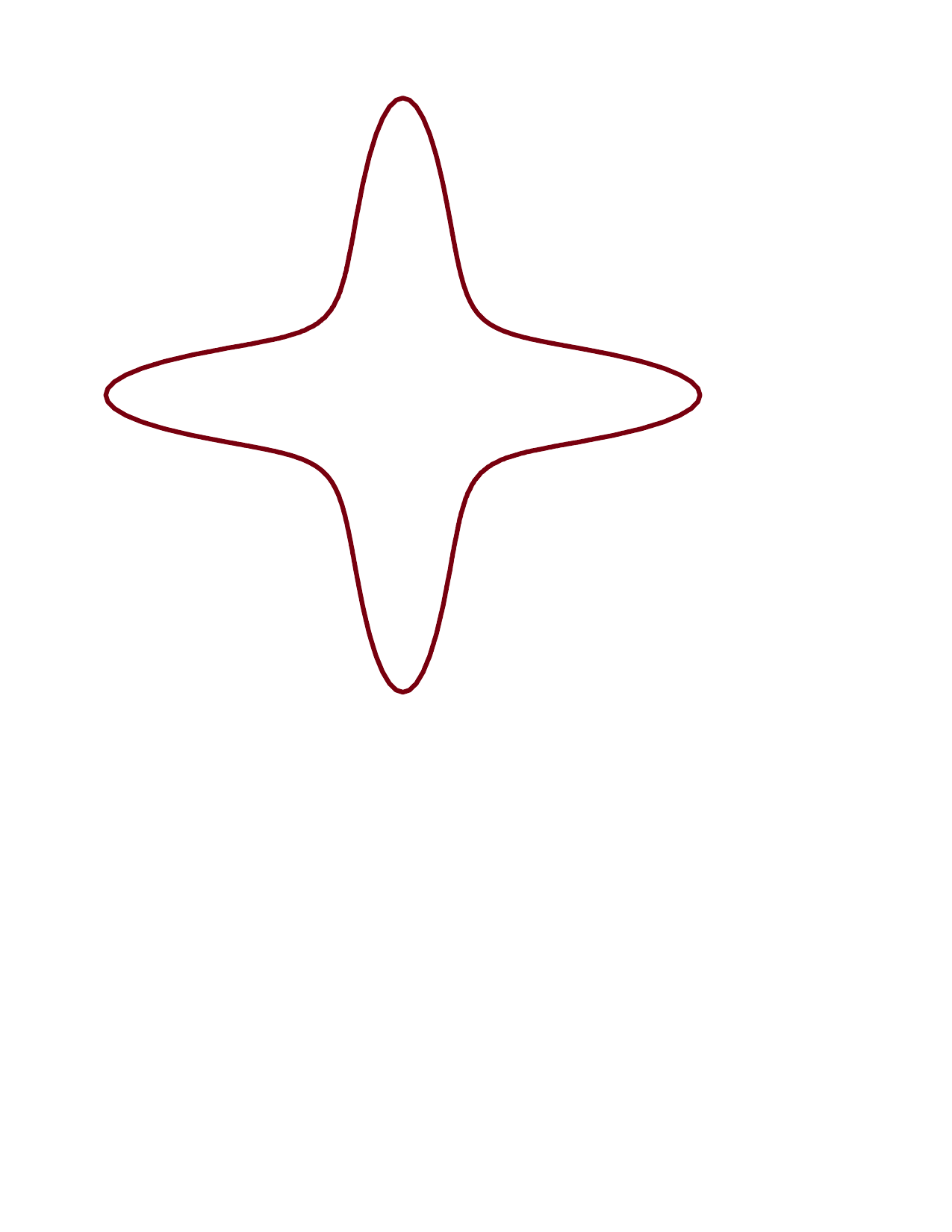}
\caption{One of the $36$ Scorza preimages of the Edge quartic}
\end{figure}

In the case of the Klein quartic we can follow an alternative path.
Note that the 28 bitangents of the Klein quartic are printed in \cite{Sh}.
Only four of them are real. The Klein quartic $f$ has the feature that 
\begin{equation}\label{eq:scf=f}f=S(f).\end{equation}
Ciani proves in \cite{Cia00} that the only quartics $f$ satisfying (\ref{eq:scf=f}) are the Klein quartic and the double conics (see also \cite{D})\footnote{It should be interesting to compute the weight decomposition of the tangent space at the fixed points of $S$, in order to understand the dynamics of the iteration of the Scorza map.}.
The other $35$ preimages of Klein quartic  through the Scorza map 
(besides the Klein quartic itself) were found
by Ciani in \cite{Cia01}, although Ciani describes them intrinsically and he does not provide the equations.
They divide in three $SL(3)$-classes, two classes (conjugate to each other) containing seven quartics each one, having $Aut$
the octahedral group of order $24$, and a (self-conjugate) class containing $21$ quartics, having $Aut=D_4$ the dihedral group of order $8$. 
This class contains three real quartics by (\ref{eq:tabletheta}).
 We have found the following solution (among the possible $35$), having octahedral symmetry. Let $\alpha=\tau+\tau^2+\tau^4=\frac{-1+\sqrt{-7}}{2}$.
If $f=x^4+y^4+z^4+2\alpha(x^3y+y^3z+z^3x)-\alpha(x^2y^2+y^2z^2+x^2z^2)+(\alpha-\frac{2}{3})(xy^3+yz^3+zx^3)-4(x^2yz+xy^2z+xyz^2)$
then $S(f)=xy^3+yz^3+zx^3$ is the Klein quartic.
The Klein quartic belongs to the pencil generated by the double (invariant) conic 
$\left[x^2+y^2+z^2+\alpha(xy+xz+yz)\right]^2$
and by the quadrilateral (given by four bitangents)
$(x+y+z)(x+(\tau^5+\tau^4+2\tau)y+(-2\tau^5-\tau^4+\tau^3-\tau^2-2*\tau)z)((-2\tau^5-\tau^4+\tau^3-\tau^2-2\tau)x+y+(\tau^5+\tau^4+2\tau)z)((\tau^5+\tau^4+2\tau)x+(-2\tau^5-\tau^4+\tau^3-\tau^2-2\tau)y+z)$. 

The M2 script is the following

\begin{verbatim}
KK=QQ
R=KK[x,y,z,u_0..u_3]

A = matrix({{0, x + 2*y, z + 2*x, -2*z + y}, 
  {x + 2*y, 0, 2*z + y, z - 2*x}, 
	  {z + 2*x, 2*z + y, 0, x - 2*y}, 
	  {-2*z + y, z - 2*x, x - 2*y, 0}});
----output will be x^4+30*x^2*y^2+y^4+30*x^2*z^2+30*y^2*z^2+z^4


det(A)	

----modification M'_{1234}, according to [Hesse]
A=matrix{{0,x-2*y,z-2*x,2*z+y},
    {x-2*y,0,-2*z+y,z+2*x},
    {z-2*x,-2*z+y,0,x+2*y},
    {2*z+y,z+2*x,x+2*y,0}}
-----output will be
--  x^4+(10/3)*x^2*y^2+y^4+(10/3)*x^2*z^2+(10/3)*y^2*z^2+z^4

	  
uu=transpose matrix {{u_0..u_3}}
--following M gives the sextic model in P^3 with coordinates u
M=diff(x,A)*uu|diff(y,A)*uu|diff(z,A)*uu

R2=KK[a_0..a_2,b_0..b_2,u_0..u_3,v_0..v_3,MonomialOrder=>{3,3,4,4}]
aa=matrix{{a_0..a_2}}, bb=matrix{{b_0..b_2}}
cs2=sub(M,R2)
----following gives the (3,3) correspondence on the sextic model with coord.(u,v)
cor33=minors(3,cs2)+sub(minors(3,cs2),apply(4,i->(u_i=>v_i)))+
ideal(matrix{{v_0..v_3}}*cs2)+
ideal(matrix{{u_0..u_3}}*sub(cs2,apply(4,i->(u_i=>v_i))))
----following lifts the (3,3) corresp. to the quartic model with coord. (a,b)
----it is a long computation, running in about 15 minutes on a PC, 
time eli=eliminate({u_0,u_1,u_2,u_3,v_0,v_1,v_2,v_3},
saturate(ideal(cs2*transpose aa)+ideal(sub(cs2,apply(4,i->(u_i=>v_i)))*
transpose bb)+cor33,ideal(u_0..u_3)*ideal(v_0..v_3)))
eli6=submatrix(gens eli,,{1..6})
-----the (3,3) correspondence is given by 6 biquadratic equations in a,b in eli6

R4=KK[x,y,z,a_0..a_2,b_0..b_2,c_0..c_14]
---now we compare the correspondence found with the correspondence obtained 
---starting from a generic quartic fc 
---given by rank (P_aP_b(fc)) <= 1
fc=(matrix{{c_0..c_14}}*transpose symmetricPower(4,matrix{{x,y,z}}))_(0,0)
f1=a_0*diff(x,fc)+a_1*diff(y,fc)+a_2*diff(z,fc)
f2=b_0*diff(x,f1)+b_1*diff(y,f1)+b_2*diff(z,f1)
xx=matrix{{x,y,z}}
ab=minors(2,diff(transpose xx,diff(xx,f2)));
---previous ab is the correspondence obtained from fc
----next aab is our correspondence
aab=ideal sub(eli6,R4);

----now to compare the two correspondences we encode in matrices with 36 columns 
---given by all biquadratic monomials
ab2=symmetricPower(2,matrix{{a_0..a_2}})**symmetricPower(2,matrix{{b_0..b_2}})
maab=diff(transpose ab2, gens aab)
---following must be nonzero
det submatrix(maab,{0..5},)
det submatrix(maab,{0,1,4,7,10,14},)
mab=diff(transpose ab2, gens ab);
----we impose that each column from mab is in the column span of maab
----in principle we have to check all 7*7 minors, but this is too expensive
----so we try with all 7*7 minors containing the first six rows, this is enough 
---in the examples found (not for Klein quartic!)
--if at the end we get infinitely many solutions, one has to add more conditions 
---given by more minors

-----------------following in case of Klein quartic
I=ideal(apply(36,i->det submatrix(maab|mab_{0},{0,1,4,7,10,14,i},)));
I=I+ideal(apply(36,i->det submatrix(maab|mab_{1},{0,1,4,7,10,14,i},)));
I=I+ideal(apply(36,i->det submatrix(maab|mab_{2},{0,1,4,7,10,14,i},)));
I=I+ideal(apply(36,i->det submatrix(maab|mab_{3},{0,1,4,7,10,14,i},)));
I=I+ideal(apply(36,i->det submatrix(maab|mab_{4},{0,1,4,7,10,14,i},)));
I=I+ideal(apply(36,i->det submatrix(maab|mab_{5},{0,1,4,7,10,14,i},)));
I=I+ideal(apply(36,i->det submatrix(maab|mab_{6},{0,1,4,7,10,14,i},)));
I=I+ideal(apply(36,i->det submatrix(maab|mab_{7},{0,1,4,7,10,14,i},)));
I=I+ideal(apply(36,i->det submatrix(maab|mab_{8},{0,1,4,7,10,14,i},)));
betti I
----------------------until here for Klein quartic

I=ideal(apply(30,i->det submatrix(maab|mab_{0},{0,1,2,3,4,5,i+6},)));
I=I+ideal(apply(30,i->det submatrix(maab|mab_{1},{0,1,2,3,4,5,i+6},)));
I=I+ideal(apply(30,i->det submatrix(maab|mab_{2},{0,1,2,3,4,5,i+6},)));
I=I+ideal(apply(30,i->det submatrix(maab|mab_{3},{0,1,2,3,4,5,i+6},)));
I=I+ideal(apply(30,i->det submatrix(maab|mab_{4},{0,1,2,3,4,5,i+6},)));
I=I+ideal(apply(30,i->det submatrix(maab|mab_{5},{0,1,2,3,4,5,i+6},)));
I=I+ideal(apply(30,i->det submatrix(maab|mab_{6},{0,1,2,3,4,5,i+6},)));
I=I+ideal(apply(30,i->det submatrix(maab|mab_{7},{0,1,2,3,4,5,i+6},)));
I=I+ideal(apply(30,i->det submatrix(maab|mab_{8},{0,1,2,3,4,5,i+6},)));
betti I
sI=saturate I;
betti sI
----sI contains now the base locus of Scorza map plus one single point,
----we have to saturate with base locus of Scorza, called bscorza

m=matrix{{0,z,-y},{-z,0,x},{y,-x,0}}
m9= diff(m,diff(xx,diff(transpose xx,fc)));
scorza=(mingens pfaffians(8,m9))_(0,0);
bscorza=ideal diff(symmetricPower(4,xx),scorza);

fiber=saturate(sI,bscorza)
----the following gives the quartic in the output
(sub(matrix {apply(15,i->(c_i)%fiber)},c_14=>1)*transpose symmetricPower(4,matrix{{x,y,z}}))_(0,0)

\end{verbatim}
\begin{alg}\ 

\begin{itemize}
\item{} INPUT: a plane quartic $f$
\item{} OUTPUT: the order of the automorphism group of linear invertible transformations which leave $f$ invariant
\end{itemize}
\end{alg}

This algorithm is straightforward. We add here the M2 script for the convenience of the reader.
The structure of groups of small size can be found by counting the elements of a given order.

\begin{verbatim}
---work on a finite field to run quicker
R2=ZZ/31991[g_0..g_8,y_0..y_2]
gg=transpose genericMatrix(R2,3,3)
-----we impose conditions to a unknown projective transformation gg
y4=symmetricPower(4,matrix{{y_0..y_2}})
y1=transpose matrix{{y_0..y_2}}
f=y_0^4+30*y_0^2*y_1^2+y_1^4+30*y_0^2*y_2^2+30*y_1^2*y_2^2+y_2^4 ---order 24 
IG=minors(2,diff(y4,sub(sub(f,R2),apply(3,i->(y_i=>((gg)*y1)_(i,0)))))||
diff(y4,f));
time sig=saturate(IG,ideal(det(gg)));
codim sig, degree sig
---if codim sig=8, then degree sig is the order of automorphism group
\end{verbatim}

 \noindent
  \textsc{g. ottaviani} -
  Dipartimento di Matematica e Informatica ``U. Dini'', Universit\`a di Firenze, viale Morgagni 67/A, 50134 Firenze (Italy). e-mail: \texttt{giorgio.ottaviani@unifi.it}
  
  \separation
  \noindent

\end{document}